\theoremstyle{plain}
\newtheorem{theorem}{Theorem}[section]
\newtheorem{proposition}[theorem]{Proposition}
\newtheorem{lemma}[theorem]{Lemma}
\newtheorem{corollary}[theorem]{Corollary}
\newtheorem{definition}[theorem]{Definition}
\newtheorem{question}[theorem]{Question}
\theoremstyle{remark}
\newtheorem{remark}[theorem]{Remark}
\renewcommand{\P}{\mathbb{P}}
\renewcommand{\k}{\mathscr{k}}
\DeclareMathOperator{\I}{\mathcal{I}}
\renewcommand{\O}{\mathscr{O}}
\renewcommand{\L}{\mathscr{L}}
\renewcommand{\S}{\mathscr{S}}
\DeclareMathOperator{\Sym}{Sym}
\DeclareMathOperator{\Z}{\mathbb{Z}}
\DeclareMathOperator{\Gr}{Gr}
\DeclareMathOperator{\CH}{CH}
\DeclareMathOperator{\G}{\mathbb{G}}
\DeclareMathOperator{\ord}{ord}
\DeclareMathOperator{\GL}{GL}
\DeclareMathOperator{\SL}{SL}
\newcommand{\transvectant}{\mathsf{Tv}}
\DeclareMathOperator{\mult}{mult}
\DeclareMathOperator{\red}{red}
\DeclareMathOperator{\init}{init}
\DeclareMathOperator{\Proj}{Proj}
\author{Anand Patel}
\title{Counting sums of two powers}
\begin{document}

\maketitle

\section{Introduction}
\label{section:introduction}

This note is about the enumerative geometry of the locus of binary forms expressible as $f^{a} + g^{b}$.  We aim to generalize two previously known enumerative facts: 
\begin{enumerate}
    \item[(I)] (Clebsch \cite{clebsch1870theorie}) A general binary sextic form is expressible as the sum of a cube and a square in $40$ essentially different ways.
    \item[(II)] (Zariski \cite{zariski1928hyperelliptic}, then Vakil \cite{vakil2001twelve}) If $p$ and $q$ are general binary forms of degree $12$, then there are exactly $3762$ values of $t$ for which $p + t q$ is the sum of a cube and a square.
\end{enumerate}

To state the general question precisely, suppose $\P^{d}$ denotes the projective space of degree $d$ binary forms, and suppose $a$ and $b$ are integers dividing $d$. Then the locus $A$ (resp. $B$) representing binary forms which are $a$'th powers (resp. $b$'th powers) is a smooth projective variety in $\P^{d}$ of dimension $m := d/a$ (resp. $n := d/b$).  The {\sl problem of  counting sums of two powers} is then to determine the degree of the {\sl join} of $A$ and $B$.  Unfortunately, $A$ and $B$ intersect, and their scheme-theoretic intersection is not friendly (e.g. it is non-reduced with embedded primes), and so the degree of the join cannot be calculated by applying basic formulas.  While we will demonstrate (in characteristic zero) how to count sums of two powers in many more cases beyond those of Clebsch, Zariski, and Vakil, we fall very short of a complete solution.  

The difficulty in the problem is governed by the dimension of the intersection $A \cap B$.  A key observation we make here is that the rational map $A \times B \dashrightarrow \G(1,d)$ sending a pair of points to the line they span is resolved by a single variety which only depends on the integers $m$ and $n$, and not on $a, b$ or $d$ individually.  This variety is denoted $\transvectant_{m,n}$ in the text (\Cref{definition:transvectantvariety}), and we call it the {\sl variety of first transvectants} because it is basically the resolution of indeterminacy of the first transvectant $\{p,q\} := p_{x}q_{y} - p_{y}q_{x}$ of two binary forms, this pairing viewed as a rational map $\P^{m} \times \P^{n} \dashrightarrow \P^{m+n-2}$.  

$\transvectant_{m,n}$ is the main subject of our investigation throughout the paper, with its importance to counting sums of two problems made precise in \Cref{proposition:clebsch-resolved}.  We try to obtain an understanding of its enumerative geometry by localizing with respect to a natural torus action.  This analysis reveals how the problem of counting sums of two powers ``reduces'' to the computation of an intersection product in the cohomology ring of $\P^{m} \times \P^{n} \times \P^{m+n-2}$ (\Cref{corollary:solutioninprinciple}).  We successfully follow through with the strategy in the cases where $\gcd(m,n) = 1$ and $\gcd(m,n) = 2$.  (This greatest common divisor is the dimension of $A \cap B$ from above.)  The motivating examples of Clebsch and Zariski-Vakil are in the $\gcd(m,n) = 1$ and $\gcd(m,n) = 2$ categories, respectively.  For another category $1$ example, we find that the locus of binary forms of type $f^{5} + g^{3}$ in $\P^{15}$ has degree $29822$.  And for another category $2$ example, the locus of binary forms of type $f^{5} + g^{2}$ in $\P^{20}$ has degree $626327$. 

In the last section, after mentioning a few interesting questions, we provide a Python script which counts sums of two powers in the first two $\gcd(m,n)$ categories.



\section{Preliminaries} 
\label{section:preliminaries}

    \subsection{Our conventions and notation} We work over an algebraically closed field $\k$ of characteristic $0$, and all schemes are of finite type over $\k$. The term {\sl variety} will mean a reduced, irreducible scheme of finite type over $\k$. If $W$ is a $\k$-vector space, then its projectivization $\P W$ parametrizes lines in $W$ rather than hyperplanes.  
    
    We let $S_j \subset \k[x,y]$ denote the $\k$-vector space of homogeneous polynomials of degree $j$.  Hence $\P S_j$ is the projective space representing degree $d$ binary forms up to scaling.  If $f \in S_j$ is a nonzero form, then we will abuse notation and simply write $f \in \P S_j$ for the corresponding point.

    We let $\Gr(2,S_j)$ denote the Grassmannian representing pencils of binary forms of degree $j$. The spaces $S_{j}$, $\P S_{j}$, $\Gr(2,S_{j})$, etc... all admit natural $\GL(2)$-actions, inherited by linearly substituting the variables $x$ and $y$.

    If $Y$ is a smooth projective variety then $\CH(Y)$ denotes the Chow ring of $Y$, and if $\xi \in \CH(Y)$ is a $0$-cycle, then $\int_{Y} \xi \in \Z$ denotes the degree of $\xi$.
    
    \subsection{Extending a rational map}
    
    The following elementary lemma provides us our first step.  We are grateful to John Doyle for conversations which led to this lemma.
    \begin{lemma}
        \label{lemma:extension}
        Suppose $X, Y$ and $Z$ are varieties with $X$ normal, and with $Y$ assumed proper. Suppose $\alpha : X \dashrightarrow Y$ is a rational map, and that $\beta : Y \to Z$ is a quasi-finite morphism.  Then $\alpha$ is regular if and only if $\beta \circ \alpha$ is regular.
    \end{lemma}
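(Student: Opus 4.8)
The forward implication is immediate and requires no hypotheses beyond the definitions: if $\alpha$ is a morphism, then $\beta \circ \alpha$ is a composition of morphisms and hence regular. All the content lies in the converse, so the plan is to assume that $\beta \circ \alpha$ extends to a genuine morphism $\gamma \from X \to Z$ (agreeing with $\beta\circ\alpha$ on the domain of $\alpha$) and to deduce that $\alpha$ itself is regular.

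The method I would use is to resolve $\alpha$ by its graph and then invoke Zariski's Main Theorem. First I would let $U \subseteq X$ be the dense open domain of $\alpha$, take $\Gamma \subseteq X \times Y$ to be the closure of the graph $\{(x,\alpha(x)) : x \in U\}$, and write $\pi \from \Gamma \to X$ and $\rho \from \Gamma \to Y$ for the two projections. Because $Y$ is proper the projection $X \times Y \to X$ is proper, and since $\Gamma$ is closed the map $\pi$ is proper; it is also birational, being an isomorphism over $U$. Thus $\Gamma$ is a variety of dimension $\dim X$, and the point is precisely that $\alpha$ is regular if and only if $\pi$ is an isomorphism, in which case $\alpha = \rho \circ \pi^{-1}$.

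The crux is to show that $\pi$ is quasi-finite, and this is exactly where the hypotheses on $\beta$ and $\gamma$ enter. Fix $x \in X$ and a point $(x,y) \in \pi^{-1}(x)$. The morphism $(\mathrm{id}_X \times \beta) \from X \times Y \to X \times Z$ carries the graph of $\alpha$ over $U$ into the graph of $\gamma$; passing to closures it carries all of $\Gamma$ into the graph of $\gamma$, whose fibre over $x$ is the single point $(x,\gamma(x))$. Hence $\beta(y) = \gamma(x)$, so $y$ lies in $\beta^{-1}(\gamma(x))$, which is finite because $\beta$ is quasi-finite. Therefore $\pi^{-1}(x)$ is finite for every $x$, and $\pi$ is quasi-finite.

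At this stage $\pi$ is proper, quasi-finite, and birational, with $X$ normal, and I would close the argument with Zariski's Main Theorem: a proper quasi-finite morphism is finite, and a finite birational morphism onto a normal variety is an isomorphism, since $\pi_* \mathcal{O}_\Gamma$ is then a finite, hence integral, subsheaf of the function field of $X$ and must equal $\mathcal{O}_X$ by integral closedness. So $\pi$ is an isomorphism and $\alpha$ is regular. I expect the genuinely delicate point to be this final appeal to Zariski's Main Theorem, where the normality of $X$ is indispensable: without it the normalization of a nodal curve furnishes a proper, birational, quasi-finite morphism that is not an isomorphism, and the conclusion of the lemma fails outright.
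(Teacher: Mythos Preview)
Your argument is correct and follows essentially the same route as the paper: pass to the graph closure $\Gamma \subset X \times Y$, use properness of $Y$ to make the projection $\pi\colon \Gamma \to X$ proper and birational, show $\pi$ is quasi-finite by mapping $\Gamma$ into the graph of $\beta\circ\alpha$ via $\mathrm{id}_X \times \beta$, and then conclude by Zariski's Main Theorem using normality of $X$. The paper's proof is nearly identical, differing only in presentation (it phrases the quasi-finiteness step as a factorization $\Gamma_\alpha \to \Gamma_{\beta\circ\alpha} \to X$ rather than a pointwise fiber argument).
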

    
    \begin{proof}
        One direction is obvious, so we focus on the non-trivial direction and assume that $\beta \circ \alpha$ is regular. Let $\Gamma_{\alpha} \subset X \times Y$ denote the closure of the graph of $\alpha$, and let $\Gamma_{\beta \circ \alpha} \subset X \times Z$ be the graph of the morphism $\beta \circ \alpha$.  We must show that the first projection $p_1 : \Gamma_{\alpha} \to X$ is an isomorphism.  The projection $p_1$ is evidently birational and proper (because $Y$ is proper), so in light of $X$'s normality it suffices to show that $p_1$ is quasi-finite. 
        
        The rule $(x,y) \mapsto (x,\beta(y))$ defines a morphism $\Gamma_{\alpha} \to X \times Z$ which must factor through $\Gamma_{\beta \circ \alpha}$ because it does so on a non-empty open subset of $\Gamma_{\alpha}$.  The induced map $\Gamma_{\alpha} \to \Gamma_{\beta \circ \alpha}$ is quasi-finite because $\beta$ is quasi-finite.   But the  first projection $\Gamma_{\beta \circ \alpha} \to X$ is an isomorphism (because $\beta \circ \alpha$ is a morphism) and in particular quasi-finite, and hence $p_1 : \Gamma_{\alpha} \to X$, which is the composite $\Gamma_{\alpha} \to \Gamma_{\beta \circ \alpha} \to X$, is quasi-finite as promised. The lemma follows.
    \end{proof}
    
    \begin{remark}
        \label{remark:grassmannian}
        \begin{enumerate}
            \item We will use \Cref{lemma:extension} in a situation where $Y$ is a Grassmannian: every non-constant morphism out of a Grassmannian is necessarily finite (because the Grassmannian's Picard group is $\Z$), and so the critical hypothesis of \Cref{lemma:extension} will be satisfied.    
            \item \Cref{lemma:extension} is true in arbitrary characteristic. 
        \end{enumerate}
    \end{remark}

    \subsection{Resolution of indeterminacy and its cycle class}
    \label{subsection:resolution}

    The problem of determining the cycle class of graph closures of rational maps to projective spaces has, in a sense, a complete solution thanks to Aluffi in \cite[~Section 2]{aluffi2015multidegrees}.  Making this explicit in particular cases is difficult, and in this section we will give a self-contained treatment in two very special situations.  These situations will correspond to the $\gcd(m,n) = 1$ and $\gcd(m,n) = 2$ categories mentioned in the introduction. 

    In the following, if $\alpha$ is an element in a Chow ring, then $[\alpha]_{k}$ denotes the degree $k$ part of $\alpha$.  Furthermore, when we write $1/(1+\alpha)$, we mean the power series expansion of $1/(1+\alpha)$ in the variable $\alpha$.

    \begin{proposition}
        \label{proposition:resolution}
        Suppose $X$ is a smooth variety, and suppose $\L$ is a line bundle on $X$. Suppose $S = \{s_1, \dots, s_r\}$ and $T = \{t_1, \dots, t_{r+1}\}$ are sets of global sections of $\L$ such that the common vanishing schemes $Y_S$ and $Y_T$ of $S$ and $T$, respectively, have pure codimension $r$ in $X$.
        Then: 
        \begin{enumerate}
            \item The blow up $X_{S}$ (resp. $X_{T}$) of $X$ along $Y_S$ (resp. $Y_T$) is a closed subscheme of $X \times \P \langle S \rangle^{*}$ (resp. $X \times \P \langle T \rangle^{*})$.
            \item $X_{S}$ is the vanishing scheme of a section of the vector bundle $\L \boxtimes \mathscr{Q}^{r-1}$ on $X \times \P \langle S \rangle^{*}$, where $\mathscr{Q}^{r-1}$ is the rank $r-1$ universal quotient bundle over $\P \langle S \rangle^{*}$.
            \item The fundamental class $[X_{S}] \in \CH^{r-1}(X \times \P \langle S \rangle^{*})$ is given by 
            \begin{equation}
                \label{equation:classS}
                [X_{S}] = \left[\frac{(1+\lambda)^{r}}{(1 + \lambda - \zeta)}\right]_{r-1},
            \end{equation}
            where $\lambda$ (resp. $\zeta$) is the first Chern class of $\L$ (resp. hyperplane class) on  $X \times \P \langle S \rangle^{*}$ pulled back  from $X$ (resp. $\P \langle S \rangle^{*})$.
            \item Let $Y_1, Y_2, \dots, Y_{k}$ denote the irreducible components of $Y_T$, each with reduced induced scheme structure.  Then there exist positive integers $e_1, \dots, e_k$ such that 
            \begin{align}
                \label{equation:classT}
                \nonumber [X_{T}] = c_{r}(\L \boxtimes \mathscr{Q}^{r}) - &\sum_{i=1}^{k} e_i [Y_i \times \P \langle T \rangle^{*}]\\ 
                 & = \left[\frac{(1+\lambda)^{r+1}}{(1 + \lambda - \zeta)}\right]_{r} - \sum_{i=1}^{k} e_i [Y_{i} \times \P \langle T \rangle^{*}],
            \end{align}
            where $\mathscr{Q}^{r}$ is the rank $r$ universal quotient bundle over $\P \langle T \rangle^{*}$.
            \item If $W \subset \langle T \rangle$ is a general codimension $1$ subspace of sections, then the integer $e_i$ from part (4) is the multiplicity of the vanishing scheme $V(W)$ at a general point of $Y_i$. 
        \end{enumerate}
    \end{proposition}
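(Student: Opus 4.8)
The plan is to compute the coefficient $e_i$ directly as a length and then to recover that length by restricting to a general fiber of the projection to $\P \langle T \rangle^{*}$. Recall from the proof of part (4) that $X_T$, together with the products $Y_i \times \P \langle T \rangle^{*}$, exhausts the components of the zero scheme $V(\sigma)$ of the evaluation section $\sigma$ of $\L \boxtimes \mathscr{Q}^{r}$ on $X \times \P \langle T \rangle^{*}$, and that in the cycle identity $c_{r}(\L \boxtimes \mathscr{Q}^{r}) = [X_T] + \sum_i e_i [Y_i \times \P \langle T \rangle^{*}]$ the coefficient $e_i$ is exactly the multiplicity $\mult_{Z_i} V(\sigma)$, i.e.\ the length of the Artinian local ring $\mathscr{O}_{V(\sigma), \eta_i}$ at the generic point $\eta_i$ of the component $Z_i := Y_i \times \P \langle T \rangle^{*}$. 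So it suffices to identify this length with $\mult_{Y_i} V(W)$ for a general hyperplane $W$.

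First I would fix a general point $[\ell] \in \P \langle T \rangle^{*}$, corresponding to the hyperplane $W = W_\ell \subset \langle T \rangle$, and restrict $\sigma$ to the slice $X \times \{[\ell]\} \cong X$. Recalling that $\sigma$ sends $(x, [\ell])$ to the image of the evaluation functional $\mathrm{ev}_x \in \langle T \rangle^{*} \otimes \L_x$ under the quotient to $(\mathscr{Q}^{r})_{[\ell]} \otimes \L_x = (\langle T \rangle^{*}/\ell) \otimes \L_x$, the restricted section $\sigma_\ell$ becomes a section of $\L^{\oplus r}$ whose $r$ components are the evaluations $x \mapsto w(x)$ of a basis $w_1, \dots, w_r$ of $W = \ell^{\perp}$. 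Consequently the restriction of $V(\sigma)$ to the slice is cut out scheme-theoretically by the ideal $(w_1, \dots, w_r)$, that is,
\[
V(\sigma) \cap (X \times \{[\ell]\}) = V(W)
\]
as subschemes of $X \cong X \times \{[\ell]\}$.

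It remains to match the two multiplicities. The slice $X \times \{[\ell]\}$ has codimension $r = \dim \P \langle T \rangle^{*}$ and meets $Z_i = Y_i \times \P \langle T \rangle^{*}$ transversally along the second factor at a general point $(y, [\ell])$ with $y$ a general, hence smooth, point of $Y_i$, cutting $Z_i$ down to $Y_i \times \{[\ell]\}$. I would then argue that the length of $V(\sigma)$ at $\eta_i$ equals the length of the restricted scheme $V(W)$ at the generic point $\eta_{Y_i}$ of $Y_i$: passing from $\eta_{Y_i}$ to $\eta_i$ is governed by the function-field extension coming from the $\P \langle T \rangle^{*}$-factor, so the generic value over $[\ell] \in \P \langle T \rangle^{*}$ of $\mult_{Y_i} V(W_\ell)$ computes $\mult_{Z_i} V(\sigma)$. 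Concretely, applying generic flatness to the family $\{ V(W_\ell) \}_{[\ell]}$, realized as the fibers of the projection $V(\sigma) \to \P \langle T \rangle^{*}$, the fiber multiplicity $\mult_{Y_i} V(W_\ell)$ is constant and equal to $e_i$ for $[\ell]$ general, which is precisely the assertion of part (5).

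The main obstacle is the multiplicity-preservation step of the previous paragraph: justifying rigorously that the length at the generic point $\eta_i$ of the product component in the total space $X \times \P \langle T \rangle^{*}$ is computed by the length of the general fiber $V(W)$ at the generic point of $Y_i$. The delicate points are that $V(\sigma)$ may carry embedded structure along $Z_i$ which must not contribute at the generic point, and that one must ensure $[\ell]$ is general enough that the fiber $V(W_\ell)$ realizes the generic, minimal multiplicity. I would handle this by combining the scheme-theoretic restriction identity above with generic flatness and the upper semicontinuity of multiplicity in the resulting flat family, thereby reducing the claim to the transversality of the slice to $Z_i$ together with the smoothness of $X$ and of $Y_i$ at the chosen general point.
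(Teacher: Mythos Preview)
Your argument is correct and follows the same overall line as the paper: restrict the tautological section to a slice $X \times \{p\}$, identify the restriction scheme-theoretically with $V(W)$, and read off the multiplicity along $Y_i$.  The only real difference is in how the multiplicity comparison is justified.  You work commutative-algebraically, interpreting $e_i$ as the length of $\mathscr{O}_{V(\sigma),\eta_i}$ and invoking generic flatness and semicontinuity to pass from the generic point of $Z_i = Y_i \times \P\langle T\rangle^{*}$ to a general closed fiber.  The paper instead argues cycle-theoretically: it first observes that for general $p$ and general $y \in Y_i$ the point $(y,p)$ lies on neither $X_T$ nor on any $Y_j \times \P\langle T\rangle^{*}$ with $j \neq i$, and then simply intersects the cycle identity from part~(4) with $[X_p]$.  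That geometric observation --- that $X_T$ misses a general $(y,p)$ --- is exactly what your generic-flatness maneuver is secretly buying, and making it explicit as the paper does would let you dispense with the semicontinuity discussion and the worry about embedded structure (embedded components never contribute to the fundamental cycle, so the length at $\eta_i$ really is $e_i$ on the nose).
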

    
    \begin{proof}
        \begin{enumerate}
        \item The sections $s_i$ together define a surjection \[\sigma: \L^{-1}\otimes \langle S \rangle \to \I_{Y_{S}}.\] This surjection then defines a surjective map of graded $\O_X$-algebras \[\Proj \Sym^{\bullet}(\L^{-1}\otimes \langle S \rangle) \to \Proj (\O_X \oplus \I_{Y_{S}} \oplus \I_{Y_{S}}^{2} \oplus \dots),\] which defines the desired closed embedding $X_{S} \hookrightarrow X \times \P \langle S \rangle^{*}$. 
        \item   The global sections $s_1 ,\dots, s_r$ define a map of sheaves \[\L^{-1} \to \O_{X} \otimes \langle S \rangle^{*}\] on $X$.   At the same time, over the projective space $\P \langle S \rangle^{*}$ we have the tautological sequence 
        \begin{equation}
            \label{equation:tautological}
            0 \to \O(-1) \to \O \otimes \langle S \rangle^{*} \to \mathscr{Q}^{r-1} \to 0.
        \end{equation}
        Pulling these sequences back to $X \times \P \langle S \rangle^{*}$, and suppressing unsavory pull-back notation, we obtain on $X \times \P \langle S \rangle^{*}$ a homomorphism \[s : \L^{-1} \to \mathscr{Q}^{r-1}.\] We may equivalently view $s$ as a section of the vector bundle $\L \boxtimes \mathscr{Q}^{r-1}$.  The description  of the embedding $X_S \hookrightarrow X \times \P \langle S \rangle^{*}$, in the proof of part (1) shows that $X_S$ is the vanishing scheme of $s$.
        
        Now we specialize only to the case of $S$, not $T$. By Krull's principal ideal theorem, every irreducible component of $V(s)$ has codimension at most $r-1$ in $X \times \P \langle S \rangle^{*}$.  We have already seen that $X_{S}$ is an irreducible component of $V(s)$.  The variety $(Y_{S})_{\red} \times \P \langle S \rangle^{*}$  has codimension $r$ in $X \times \P \langle S \rangle^{*}$, and so it is not an irreducible component of $V(s)$.  Therefore, $X_{S}$ is the unique irreducible component of $V(s)$, and assertion (2) follows.
    
        \item This follows immediately from part (2), and the fact that the cycle class of $V(s)$ is the top Chern class of the corresponding vector bundle $\L \boxtimes \mathscr{Q}^{r-1}$.  To compute this class, and to see why it is what has been indicated in the statement, simply tensor \eqref{equation:tautological} with $\L$ and use the Whitney sum formula.
        
        \item Each set $Y_{i} \times \P \langle T \rangle^{*}$ has codimension $r$ in $X \times \P \langle T \rangle^{*}$, and so the equality of sets \[V(s)_{\red} = X_{T} \cup \left((Y_{T})_{\red} \times \P \langle T \rangle^{*}\right) = X_{T} \cup (\cup_{i} Y_{i} \times \P \langle T \rangle^{*})\] implies an equality of cycles of the form \[[V(s)] = [X_{T}] + \sum_{i} e_i [Y_i \times \P \langle T \rangle^{*}].\]  Since $V(s)$ attains its expected codimension $r$, the cycle class of $V(s)$ is the top Chern class of the corresponding rank $r$ vector bundle $\L \boxtimes \mathscr{Q}^{r}$.  The assertion in part (4) follows.
        \item A general codimension $1$ subspace $W \subset \langle T \rangle$ is equivalently a general point $p \in \P \langle T \rangle^{*}$. Let $X_{p}$ denote the subvariety $X \times \{p\} \subset X \times \P \langle T \rangle^{*}$.  Let $y \in Y_{i}$ be a general point. Then the point $(y,p)$ is not contained in $X_{T}$, nor is it contained in the sets $Y_{j} \times \P \langle T \rangle^{*}$ for $j \neq i$.  
        
        Now consider the intersection $X_{p} \cap V(s)$, $s$ being the section of $\L \boxtimes \mathscr{Q}^{r}$ from the proof of part (2). This intersection has an equivalent description: upon identifying $X_p$ with $X$ in the obvious way, $X_{p} \cap V(s)$ is the vanishing scheme $V(W)$. The claim now follows by intersecting the equation \eqref{equation:classT} with the cycle $[X_{p}]$.
    \end{enumerate}
    \end{proof}

    \subsection{Basic properties of the first transvectant}

    \begin{definition}
        \label{definition:transvectant} Let $f$ and $g$ denote binary forms of degrees $m$ and $n$, respectively. The \textbf{first transvectant of $f$ and $g$}, denoted $$\{f,g\}$$ is the degree $m+n-2$ form $f_{x}g_{y} - f_{y}g_{x}$, where subscripts denote partial differentiation.
    \end{definition}

    The first transvectant is fundamental in the invariant theory of binary forms.  Most importantly, it is an $\SL(2)$-equivariant bilinear map $S_{m} \times S_{n} \to S_{m+n-2}$.  Therefore, it induces a $\GL(2)$-equivariant rational map $\P S_m \times \P S_{n} \dashrightarrow \P S_{m+n-2}$, and in the next proposition we will identify the locus of indeterminacy of this rational map.

   \begin{proposition}
       \label{lemma:indeterminacy}
       Suppose $f$ and $g$ are two nonzero homogeneous forms of degrees $m$ and $n$ respectively. If $\{f,g\} = 0$, then there is a form $h$ of degree $\gcd(m, n)$ such that $f$ and $g$ are both powers of $h$, up to scaling.  
   \end{proposition}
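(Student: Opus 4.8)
The plan is to show that the transvectant condition $\{f,g\} = 0$ is equivalent to a clean multiplicative relation $g^{m} = c\,f^{n}$ for some nonzero scalar $c$, and then to read off the common form $h$ from unique factorization in $\k[x,y]$. To establish the relation, I would first rewrite the transvectant using the Euler identities $x f_{x} + y f_{y} = m f$ and $x g_{x} + y g_{y} = n g$: substituting $f_{x} = (mf - y f_{y})/x$ and $g_{x} = (ng - y g_{y})/x$ into $\{f,g\} = f_{x}g_{y} - f_{y}g_{x}$ and simplifying yields $x\{f,g\} = m f g_{y} - n f_{y} g$, while the symmetric substitution gives $y\{f,g\} = n f_{x} g - m f g_{x}$. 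Now consider the degree-zero rational function $\Phi := g^{m}/f^{n}$ on the dense open set $\{f \neq 0\}$. A direct computation shows $\Phi_{y} = \tfrac{g^{m-1}}{f^{n+1}}(m f g_{y} - n f_{y} g) = \tfrac{g^{m-1}}{f^{n+1}}\,x\{f,g\}$ and $\Phi_{x} = -\tfrac{g^{m-1}}{f^{n+1}}\,y\{f,g\}$, so $\{f,g\} = 0$ forces both partials of $\Phi$ to vanish on the irreducible variety $\{f \neq 0\}$. Since $\k$ has characteristic $0$, this makes $\Phi$ a constant $c$, which is nonzero as $f,g$ are nonzero, and clearing denominators gives the polynomial identity $g^{m} = c\,f^{n}$.

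Second, I would extract $h$ from this identity by unique factorization. Writing $f = \alpha \prod_{i}\ell_{i}^{a_{i}}$ and $g = \beta\prod_{i}\ell_{i}^{b_{i}}$ as products of distinct linear forms $\ell_{i}$ (allowing zero exponents), the equality $g^{m} = c\,f^{n}$ forces the two sides to share the same linear factors with equal multiplicities, that is, $m\,b_{i} = n\,a_{i}$ for every $i$. Setting $\delta := \gcd(m,n)$ and writing $m = \delta m'$, $n = \delta n'$ with $\gcd(m',n') = 1$, the relation becomes $m' b_{i} = n' a_{i}$, so coprimality gives $m' \mid a_{i}$ and $n' \mid b_{i}$, with $a_{i} = m' c_{i}$ and $b_{i} = n' c_{i}$ for nonnegative integers $c_{i}$. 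Then $h := \prod_{i}\ell_{i}^{c_{i}}$ satisfies $f = \alpha\,h^{m'}$ and $g = \beta\,h^{n'}$, so $f$ and $g$ are both powers of $h$ up to scaling; counting degrees gives $\deg h = \sum_{i} c_{i} = m/m' = \delta = \gcd(m,n)$, as required.

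The routine algebra is harmless; the two points that require care are the passage from a vanishing differential to the genuine polynomial identity $g^{m} = c\,f^{n}$, where both the characteristic-zero hypothesis and the irreducibility of $\{f \neq 0\}$ are essential, and the elementary but indispensable number-theoretic bookkeeping with the coprime pair $(m',n')$ that produces integer exponents $c_{i}$ and pins down $\deg h$ exactly.
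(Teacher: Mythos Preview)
Your proof is correct. The underlying mechanism matches the paper's: deduce from $\{f,g\}=0$ that a suitable degree-zero ratio is constant (using characteristic zero), obtain a monomial relation between $f$ and $g$, and then extract $h$ by unique factorization together with the coprimality of $m/\gcd(m,n)$ and $n/\gcd(m,n)$.

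The organization differs, though. The paper first treats the equal-degree case by interpreting $\{f,g\}$ as the derivative of the map $\P^{1}\to\P^{1}$, $[x:y]\mapsto[f:g]$, and then reduces the general case to it via the identity $\{f^{k},g^{l}\}=kl\,f^{k-1}g^{l-1}\{f,g\}$, arriving at $f^{k}=c\,g^{l}$ with $k,l$ coprime. You instead go straight to the rational function $\Phi=g^{m}/f^{n}$ and use the Euler identities to see $x\{f,g\}=mfg_{y}-nf_{y}g$ and $y\{f,g\}=nf_{x}g-mfg_{x}$, which makes the vanishing of $\Phi_{x},\Phi_{y}$ immediate without a case split. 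Your route is a bit more computational but more streamlined; the paper's is a bit more geometric. The endgame (comparing multiplicities of linear factors and using coprimality to produce integer exponents $c_{i}$) is essentially identical in both.
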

   \begin{proof}
       We begin with the case that $m = n$. In this case, consider the map $\rho: \P^1 \to \P^{1}$ given by $[x:y] \mapsto [f:g]$. If $h$ is the greatest common divisor of the forms $f$ and $g$, and if $\overline{f}$ and $\overline{g}$ denote $f/h$ and $g/h$, then $\rho$ is also given by the map $[x:y] \mapsto [\overline{f}:\overline{g}]$. The condition $\{f,g\} = 0$ is equivalent to the vanishing of the derivative of $\rho$ at a general point of $\P^{1}$. Therefore, $\rho$ is constant, so $f$ and $g$ are scalar multiples of each other, and the proposition is proved in this case.
   
       Now let us assume $m$ and $n$ are not necessarily equal.  Then there are relatively prime integers $k$ and $l$ such that $f^{k}$ and $g^{l}$ have the same degree. But if $\{f,g\} = 0$, then it follows that $\{f^{k},g^{l}\} = 0$ as well, because $\{f^{k},g^{l}\} = klf^{k-1}g^{l-1}\{f,g\}$. Therefore, by the case settled in the first paragraph,  $f^{k}$ and $g^{l}$ must be scalar multiples of each other. In particular, $f$ and $g$ vanish at the same set of points, which we denote by $S$.
   
       Suppose, then, that $\sum_{p \in S} \ord_{p}(f) p$ and $\sum_{p \in S} \ord_{p}(g) p$ are the divisors of vanishing of $f$ and $g$.  We just observed that $\sum_{p \in S} (k \ord_{p}(f)) p = \sum_{p \in S} (l \ord_{p}(g)) p$. By co-primality of $k$ and $l$, we see that $l \mid \ord_{p}(f)$ and $k \mid \ord_{p}(g)$ for all $p \in S$. Let $h$ be a form such that the vanishing divisor of $h$ is $\sum_{p \in S} (\ord_{p}(f)/l) p = \sum_{p \in S} (\ord_{p}(g)/k) p$. Then $f = h^{l}$ and $g = h^{k}$, up to scale.  The fact that $h$ has degree $\gcd(\deg f, \deg g)$ is also immediate, completing the proof.
   \end{proof}

   \begin{remark}
    \label{remark:indeterminacycharacteristic}
    In the proof of \Cref{lemma:indeterminacy}, we deduced the vanishing of the derivative of a morphism from its constancy. Therefore, \Cref{lemma:indeterminacy} critically uses the characteristic zero assumption on $\k$.
\end{remark}

\begin{corollary}
    \label{corollary:wronskiregular}
    The map $\upsilon: \Gr(2,S_{d}) \dashrightarrow \P S_{2d-2}$ which sends a pencil $p \wedge q$ to the point $\{p,q\}$ is regular. 
\end{corollary}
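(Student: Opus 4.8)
The plan is to realize $\upsilon$ as the restriction of a genuine linear projection and then certify that the Grassmannian avoids the center of that projection by invoking \Cref{lemma:indeterminacy}. The point is that all the analytic work — understanding exactly when the transvectant vanishes — has already been carried out in that proposition, so the remaining task is purely a matter of packaging it as a morphism.

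First I would observe that the first transvectant $\{\cdot,\cdot\} \colon S_d \times S_d \to S_{2d-2}$ is bilinear and alternating (indeed $\{q,p\} = -\{p,q\}$ and $\{p,p\} = 0$), so it factors through a single linear map $\tau \colon \wedge^2 S_d \to S_{2d-2}$ sending the decomposable tensor $p \wedge q$ to $\{p,q\}$. A direct check of how the transvectant transforms under a change of basis of the pencil, namely the identity $\{ap+bq,\, cp+dq\} = (ad-bc)\{p,q\}$, confirms that the assignment $p \wedge q \mapsto \{p,q\}$ descends to a well-defined map on $\P(\wedge^2 S_d)$ up to scale, matching the scaling of the Plücker coordinate $p \wedge q$ itself. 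Consequently, after Plücker-embedding $\Gr(2,S_d) \hookrightarrow \P(\wedge^2 S_d)$, the map $\upsilon$ is exactly the restriction to the Grassmannian of the linear projection $\pi \colon \P(\wedge^2 S_d) \dashrightarrow \P S_{2d-2}$ induced by $\tau$, whose center of indeterminacy is the linear subspace $\P(\ker \tau)$.

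Next I would recall the standard fact that a linear projection is a morphism away from its center, so its restriction to any subvariety disjoint from the center is regular. It therefore suffices to show $\Gr(2,S_d) \cap \P(\ker \tau) = \varnothing$. A point of this intersection would be a genuine pencil $p \wedge q$ (hence $p$ and $q$ linearly independent) with $\{p,q\} = 0$. But \Cref{lemma:indeterminacy}, applied with $m = n = d$, forces any such pair to be scalar multiples of one another, contradicting their linear independence as a basis of the pencil. Hence the intersection is empty and $\upsilon$ is regular.

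I expect the only point requiring genuine care to be the bookkeeping that identifies $\upsilon$ with a bona fide linear projection — that is, verifying the compatibility of the $(ad-bc)$ scaling of the transvectant with the Plücker scaling so that the map descends correctly to $\P S_{2d-2}$. Once that identification is secured, regularity is immediate from \Cref{lemma:indeterminacy}. Note that the alternative route through \Cref{lemma:extension} is not needed here, as that lemma is tailored to situations where the \emph{target} is a Grassmannian rather than the source.
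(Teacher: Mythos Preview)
Your proof is correct and takes essentially the same approach as the paper: both reduce regularity of $\upsilon$ to the $m=n=d$ case of \Cref{lemma:indeterminacy}. The paper's proof is a one-line citation of that lemma, whereas you have spelled out the implicit packaging of $\upsilon$ as a linear projection restricted to the Pl\"ucker-embedded Grassmannian and verified that the Grassmannian misses the center.
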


\begin{proof}
    This follows from the $m = n = d$ case of \Cref{lemma:indeterminacy}. 
\end{proof}

\section{Introducing the variety of first transvectants} 
\label{section:main}

Let us now fix positive integers $m, n, a, b, d$ such that $a m = b n = d$.  Consider the following commutative diagram:

\begin{center}
    \begin{equation}\label{diagram:clebsch}
\begin{tikzcd}[row sep=large, column sep=large]
    \P S_{m} \times \P S_{n} \arrow[r, "\varphi_{a,b}", dashed] \arrow[d,"\epsilon", dashed] & \Gr(2,S_{d}) \arrow[d, "{\upsilon}"]\\
    \P S_{m(a-1) + n(b-1)} \times \P S_{m+n-2} \arrow[r, "\mult"] & \P S_{2d-2}
\end{tikzcd}
    \end{equation}
\end{center}

Here
\begin{enumerate}
    \item $\varphi_{a,b}$ is the rational map $(f,g) \mapsto f^a \wedge g^b$, 
    \item $\epsilon$ is the rational map $(f,g) \mapsto (f^{a-1}g^{b-1},\{f,g\})$,
    \item $\mult$ is the multiplication of forms, and
    \item $\upsilon$ is the map sending a pencil $p \wedge q$ to $\{p,q\}$, which is regular by \Cref{corollary:wronskiregular}.
\end{enumerate}

Clearly, if we can explicitly describe and work with a resolution of indeterminacy of $\varphi_{a,b}$, it would get us most of the way towards solving the enumerative problem of counting sums of $a$'th powers and $b$'th powers.  

Next observe that to resolve the indeterminacy of $\epsilon$ we need only resolve the indeterminacy of the composite of $\epsilon$ with projection to the factor $\P S_{m+n-2}$, because the composite with the projection to the first factor is regular.  This motivates the following definition:

\begin{definition}
    \label{definition:transvectantvariety}
    The \textbf{variety of first transvectants} is the variety 
    \begin{equation}\label{equation:beta}
    \beta: \transvectant_{m,n} \to \P S_{m} \times \P S_{n} \times \P S_{m+n-2}
    \end{equation} obtained by normalizing the closure of the graph of the first transvectant map $\P S_{m} \times \P S_{n} \dashrightarrow \P S_{m+n-2}$ given by $(f,g) \mapsto \{f,g\}$. 
\end{definition}

A general point of $\transvectant_{m,n}$ represents a triple $(f,g,t)$ of binary forms where $t$ is the first transvectant of $f$ and $g$, all three considered up to scaling.  It also contains limits of such triples as $\{f,g\}$ tends to $0$.

\begin{remark}
    \label{remark:schemestructure}
    The base scheme $Y \subset \P S_{m} \times \P S_{n}$ of the first transvectant map $\{\,,\,\}: \P S_{m} \times \P S_{n} \dashrightarrow \P S_{m+n-2}$ is the common vanishing scheme of the $m+n-1$ coefficients of the transvectant $\{f,g\}$, which are quadratic polynomials of bi-degree $(1,1)$ in the coefficients of $f$ and $g$. 
\end{remark}

\begin{proposition}
    \label{proposition:clebsch-resolved}
    \begin{enumerate}
        \item The rational map $\varphi_{a,b}$ extends to a morphism on $\transvectant_{m,n}$.
        \item $\transvectant_{m,n}$ inherits an action of $\GL(2)$ making $\beta$ an equivariant morphism.
    \end{enumerate}
\end{proposition}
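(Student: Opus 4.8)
The plan is to obtain both parts from structure already in place: part (1) from \Cref{lemma:extension} applied to the commutative diagram \eqref{diagram:clebsch}, and part (2) from the $\GL(2)$-equivariance of the transvectant together with the universal property of normalization.

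For part (1), I would write $\nu \from \transvectant_{m,n} \to \overline{\Gamma} \subset \P S_m \times \P S_n \times \P S_{m+n-2}$ for the normalization map defining $\transvectant_{m,n}$, so that $\beta$ is $\nu$ postcomposed with the inclusion. Let $\alpha \from \transvectant_{m,n} \dashrightarrow \Gr(2,S_d)$ denote the rational map obtained by composing the projection to $\P S_m \times \P S_n$ with $\varphi_{a,b}$; on a dense open it is $(f,g,t) \mapsto f^a \wedge g^b$. I would then apply \Cref{lemma:extension} with $X = \transvectant_{m,n}$ (normal, being a normalization), $Y = \Gr(2,S_d)$ (proper), and the lemma's $\beta$ taken to be $\upsilon$. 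Since $\upsilon$ is a non-constant morphism out of a Grassmannian it is finite, hence quasi-finite, by \Cref{remark:grassmannian}, so the hypotheses of \Cref{lemma:extension} are met and it suffices to show $\upsilon \circ \alpha$ is regular. Using the commutativity of \eqref{diagram:clebsch} and the identity $\{f^a,g^b\} = ab\, f^{a-1}g^{b-1}\{f,g\}$, the composite $\upsilon \circ \alpha$ is, as a rational map, $(f,g,t) \mapsto f^{a-1}g^{b-1} \cdot t$ on $\transvectant_{m,n}$. Each ingredient here is visibly regular: the powers $f^{a-1}$, $g^{b-1}$ and their product are regular because forming powers and products of nonzero forms are morphisms, while $t$ is the third projection of $\beta$, regular by construction. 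Thus $\upsilon \circ \alpha$ is regular, and \Cref{lemma:extension} then yields that $\varphi_{a,b}$ extends to a morphism on $\transvectant_{m,n}$.

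For part (2), I would start from the fact, recorded after \Cref{definition:transvectant}, that the transvectant map $\P S_m \times \P S_n \dashrightarrow \P S_{m+n-2}$ is $\GL(2)$-equivariant for the substitution action (the $\det$ factor introduced by the Jacobian disappears projectively). Hence its graph, and therefore the closure $\overline{\Gamma}$, is stable under the diagonal $\GL(2)$-action on $\P S_m \times \P S_n \times \P S_{m+n-2}$, so the action restricts to a morphism $\mu \from \GL(2) \times \overline{\Gamma} \to \overline{\Gamma}$. Now $\GL(2) \times \transvectant_{m,n}$ is an irreducible normal variety (a product of a smooth variety with a normal one), and the composite $\mu \circ (\mathrm{id}\times \nu) \from \GL(2) \times \transvectant_{m,n} \to \overline{\Gamma}$ is dominant. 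Since a dominant morphism from a normal integral scheme factors uniquely through $\nu$, this composite lifts to a morphism $\tilde\mu \from \GL(2)\times\transvectant_{m,n} \to \transvectant_{m,n}$. The group-action axioms for $\tilde\mu$ would follow from those for $\mu$ via the uniqueness clause of this factorization property (both sides of each identity agree after composition with $\nu$ and have normal source), and $\beta$ is equivariant by construction.

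The identity $\{f^a,g^b\} = ab\, f^{a-1}g^{b-1}\{f,g\}$ underpinning the commutativity of \eqref{diagram:clebsch} is an immediate chain-rule computation. The main obstacle I anticipate is in part (2): I must be careful that functoriality of normalization produces a genuine \emph{algebraic} group action rather than a mere family of abstract automorphisms, and it is precisely the normality of $\GL(2)\times\transvectant_{m,n}$ that allows the universal property to apply to the whole action morphism at once, forcing $\tilde\mu$ to be a morphism of varieties.
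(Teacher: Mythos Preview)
Your proposal is correct and follows essentially the same argument as the paper: for part (1) the paper also applies \Cref{lemma:extension} with $X = \transvectant_{m,n}$, $Y = \Gr(2,S_d)$, $Z = \P S_{2d-2}$, and the lemma's $\beta$ taken to be $\upsilon$, after first noting that $\epsilon$ (equivalently your map $(f,g,t)\mapsto f^{a-1}g^{b-1}\cdot t$) extends to $\transvectant_{m,n}$; for part (2) the paper likewise uses $\GL(2)$-equivariance of the transvectant to conclude invariance of the graph closure and then lifts the action to the normalization. Your write-up is simply more explicit about the details the paper leaves tacit, such as the chain-rule identity and the normality of $\GL(2)\times\transvectant_{m,n}$ needed for the lift.
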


\begin{proof} 
    \begin{enumerate} 
        \item First note that $\epsilon$ extends to a morphism on $\transvectant_{m,n}$. Next we apply \Cref{lemma:extension} with $X = \transvectant_{m,n}$, $Y = \Gr(2,S_{d})$, $Z = \P S_{2d-2}$, $\alpha = \varphi_{a,b}$, and $\beta = \upsilon$, along with the commutativity of \eqref{diagram:clebsch} to reach the desired conclusion. 
       
        \item This is true because the rational map $$\{,\}:\P S_{m} \times \P S_{n} \dashrightarrow \P S_{m+n-2}$$ is originally $\GL(2)$-equivariant, and so the closure of its graph is $\GL(2)$-invariant.  Finally, an action of a group on a variety automatically lifts to an action of the group on its normalization, and so the claim follows.
    \end{enumerate}
\end{proof}

The picture we obtain from \Cref{proposition:clebsch-resolved} is now:

\begin{center}
    \begin{equation}\label{equation:clebsch-resolved}
    \begin{tikzcd}[row sep=large, column sep=large]
        \transvectant_{m,n} \arrow[from=1-1, to=2-2, "\beta_{12}"] \arrow[from=1-1, to=2-3, "\widetilde{\varphi_{a,b}}", bend left = 10] \arrow[from=1-1, to=3-2, "\widetilde{\epsilon}", bend right = 20] &  & \\
    & \P S_{m} \times \P S_{n} \arrow[r, "\varphi_{a,b}", dashed] \arrow[d, "\epsilon", dashed] & \Gr(2,S_{d}) \arrow[d, "\upsilon"]\\
        & \P S_{m(a-1) + n(b-1)} \times \P S_{m+n-2} \arrow[r, "\mult"] & \P S_{2d-2}
    \end{tikzcd}
    \end{equation}
\end{center}

Here $\beta_{12}$ is by definition the composite $\transvectant_{m,n} \xrightarrow{\beta} \P S_{m} \times \P S_{n} \times \P S_{m+n-2} \xrightarrow{p_{12}} \P S_{m} \times \P S_{n}$, and $\widetilde{\epsilon}$ is the extension of $\epsilon$ to $\transvectant_{m,n}$.  Observe that $\beta_{12}$ is a birational morphism.  Diagram \eqref{equation:clebsch-resolved} shows how $\transvectant_{m,n}$ is crucial to the problem of counting sums of two powers, and so we will study some of its enumerative geometry next.

\section{Enumerative geometry of $\transvectant_{m,n}$}
\label{section:enumerative}

To understand $\transvectant_{m,n}$ in any depth, we must introduce coordinates.  To that end, we will denote by 
\begin{align*}
    a = a_{0}x^{m} + {e \choose 1} a_{1}x^{m-1}y + {e \choose 2} a_{2}x^{m-2}y^{2} + \dots + a_{e}y^{e}
\end{align*} a general element of $\P S_{e}$.  If 

\begin{align*}
f &= f_{0}x^{m} + {m \choose 1} f_{1}x^{m-1}y + {m \choose 2} f_{2}x^{m-2}y^{2} + \dots + f_{m}y^{m}\\
g &= g_{0}y^{n} + {n \choose 1} g_{1}x^{n-1}y + {n \choose 2} g_{2}x^{n-2}y^{2} + \dots + g_{n}x^{n}
\end{align*}
are forms of degree $m$ and $n$, respectively, then the transvectant $t = \{f,g\}$ is a form of degree $m+n-2$ whose coefficients are described in the next lemma.

\begin{lemma}
    \label{lemma:transvectantcoefficients}
    Let $t = t_{0}x^{m+n-2} +  t_{1}x^{m+n-3}y + t_{2}x^{m+n-4}y^{2} + \dots + t_{m+n-2}y^{m+n-2}$ be the transvectant of $f$ and $g$. 
    \begin{enumerate}   
        \item The coefficient $t_k$ is a homogeneous sum of monomials of the form $f_i g_j$ for $i+j = k+1$. Every monomial $f_i g_j$ satisfying $i+j = k+1$ appears with non-zero coefficient in $t_k$.
        \item If $f_i, g_j, f_j, g_i$ all exist, then the coefficients of $f_i g_j$ and $f_j g_i$ in $t_{i+j-1}$ sum to zero.
    \end{enumerate}
\end{lemma}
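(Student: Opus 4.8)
The plan is to reduce both parts to a single closed-form expression for the coefficients of $t = \{f,g\}$, obtained by differentiating and multiplying in the chosen binomial normalization. First I would record the partial derivatives: using the identities $(m-i)\binom{m}{i} = m\binom{m-1}{i}$ and $i\binom{m}{i} = m\binom{m-1}{i-1}$, the partials take the clean shifted-binomial form $f_x = m\sum_{i=0}^{m-1}\binom{m-1}{i} f_i\, x^{m-1-i}y^{i}$ and $f_y = m\sum_{i=0}^{m-1}\binom{m-1}{i} f_{i+1}\, x^{m-1-i}y^{i}$, with the analogous expressions for $g_x, g_y$ (replacing $m$ by $n$).

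Multiplying out $\{f,g\} = f_x g_y - f_y g_x$ and collecting the coefficient of $x^{m+n-2-k}y^{k}$, every surviving term is a product of one $f$-coefficient and one $g$-coefficient whose indices sum to $k+1$; this already yields the first sentence of part (1). Tracking the index shifts carefully, the coefficient of the monomial $f_a g_b$ (with $a+b = k+1$) in $t_k$ works out to
\[
c_{a,b} \;=\; mn\left[\binom{m-1}{a}\binom{n-1}{b-1} - \binom{m-1}{a-1}\binom{n-1}{b}\right].
\]
Everything after this is bookkeeping with this one expression.

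For the nonvanishing assertion in part (1), I would show $c_{a,b}\neq 0$ by comparing the two binomial products. For boundary indices (one of $a=0$, $a=m$, $b=0$, $b=n$) exactly one of the two products vanishes while the other is a positive binomial coefficient, so $c_{a,b}\neq 0$ outright. For interior indices one divides and uses $\binom{N}{r}/\binom{N}{r-1} = (N-r+1)/r$ to reduce the comparison to $(m-a)/a \neq (n-b)/b$, i.e. to $an \neq bm$. I expect this comparison to be the main obstacle: it is precisely the step where the arithmetic of $m$ and $n$ enters, and one must be candid that the formula \emph{forces} $c_{a,b}=0$ exactly on the monomials with $an = bm$, so the nonvanishing holds throughout the valid range when no such interior solution occurs (in particular whenever $\gcd(m,n)=1$).

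Part (2) I would deduce from the antisymmetry $\{f,g\} = -\{g,f\}$, which is immediate from the defining expression $f_x g_y - f_y g_x$. Comparing, on the two sides, the coefficient of the monomial in which the first argument contributes index $i$ and the second contributes index $j$ gives $c_{i,j} = -c_{j,i}$ whenever $f$ and $g$ carry the same coordinate conventions; the two coefficients then sum to zero. Alternatively, when $m=n$ one verifies $c_{i,j} + c_{j,i} = 0$ directly from the displayed formula, since the four binomial products cancel in pairs. This is the short step, contingent only on both monomials $f_i g_j$ and $f_j g_i$ being legitimate, which is exactly the hypothesis that $f_i, g_j, f_j, g_i$ all exist.
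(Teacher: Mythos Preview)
Your explicit computation is exactly the ``direct calculation'' the paper invokes, and your closed form
\[
c_{a,b} \;=\; mn\left[\binom{m-1}{a}\binom{n-1}{b-1} - \binom{m-1}{a-1}\binom{n-1}{b}\right]
\]
is correct. You are also right to flag that the nonvanishing claim in part~(1) is not literally true: $c_{a,b}=0$ precisely when $an=bm$, and in particular $c_{m/2,\,n/2}=0$ when $\gcd(m,n)=2$. This is not a defect in your argument but an imprecision in the lemma as stated---indeed, the paper itself relies on exactly this vanishing later (in the proof of \Cref{theorem:czvclass}, where the ``middle polynomial'' $t_{m/2+n/2-1}$ has no constant term after dehomogenization for this very reason).

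Your treatment of part~(2), however, has a genuine gap. The antisymmetry $\{f,g\}=-\{g,f\}$ only yields $c_{i,j}^{(m,n)}=-c_{j,i}^{(n,m)}$, with the roles of $m$ and $n$ swapped in the second expression; it does \emph{not} give $c_{i,j}^{(m,n)}+c_{j,i}^{(m,n)}=0$ unless $m=n$. And in fact the statement fails when $m\neq n$: for $m=2$, $n=3$, $i=1$, $j=2$ one computes directly that the coefficients of $f_1g_2$ and $f_2g_1$ in $t_2$ are $6$ and $-12$, which do not sum to zero. So, as with part~(1), you have bumped into an imprecision in the lemma rather than a flaw in your method; your ``alternatively, when $m=n$'' remark is the correct scope of the claim, and your formula makes this transparent.
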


\begin{proof}
    This follows from direct calculation of the first transvectant.
\end{proof}

\subsection{A bit of equivariant geometry}
\label{subsection:equivariant}

Throughout this section, we fix positive integers $m,n,a,b,d$ satisfying $am = bn = d$.

Now we will look at the $T$-equivariant geometry of $\transvectant_{m,n}$ and the morphism $\widetilde{\varphi_{a,b}}$ in \eqref{equation:clebsch-resolved}, where $$T \subset \GL(2)$$ is the $1$-dimensional torus acting on $x$ with weight $1$ and acting on $y$ with weight $-1$. For any integer $u$ we let $\k(u)$ denote the $1$-dimensional representation of $T$ having weight $u$. If $e \geq 1$ is a positive integer, then $T$ acts on $S_e$ by acting on the monomial $x^iy^{e-i}$ with weight $2i-e$.

Observe that $\P S_{e}$ has only finitely many $T$-fixed points, namely the degree $e$ monomials $x^iy^{e-i}$ for $i = 0, \dots, e$. Therefore, since $\transvectant_{m,n}$ is the normalization of a closed $T$-invariant subscheme of $\P S_{m} \times \P S_{n} \times \P S_{m+n-2}$, it also has only finitely many $T$-fixed points. 

Let us denote by $$\S(d) \to \transvectant_{m,n}$$ the pullback of the tautological rank $2$ bundle under the map $\widetilde{\varphi_{a,b}}$ from \eqref{equation:clebsch-resolved}.  Our immediate objective is to determine the two $T$-weights of the fiber of $\S(d)$ at an arbitrary $T$-fixed point $t \in \transvectant_{m,n}$.  It follows that \[\beta(t) = (x^{i}y^{m-i}, x^{j}y^{n-j}, x^{k}y^{m+n-2-k}) \in \P S_{m} \times \P S_{n} \times \P S_{m+n-2}\] for some integers $i,j,k$. 

First we make a definition for convenience, naturally separating the fixed points into two types. 

\begin{definition}
    \label{definition:exceptionalfixedpoint} A $T$-fixed point $t \in \transvectant_{m,n}$ is \textbf{exceptional} if $t$ is an element of the non-isomorphism locus of $\beta_{12}: \transvectant_{m,n} \to \P S_{m} \times \P S_{n}$.  A $T$-fixed point is \textbf{unexceptional} otherwise.
\end{definition}

The restriction of $\S(d)$ to the $T$-fixed point $t$ is a $T$-representation, and hence is a direct sum of two characters.  The next proposition determines these characters. 

\begin{proposition}
\label{proposition:weightsofS}
Let $t \in \transvectant_{m,n}$ be a $T$-fixed point with $\beta(t) = (x^{i}y^{m-i}, x^{j}y^{n-j}, x^{k}y^{m+n-2-k})$ the corresponding point in $\P S_{m} \times \P S_{n} \times \P S_{m+n-2}$.  Then 
\[ \S(d)|_{t} = \k(2ai - d) \oplus \k((2bj - d) + 2k-2i-2j+2)\] as $T$-representations.
\end{proposition}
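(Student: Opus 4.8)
The plan is to use that $\S(d)|_{t}$, being a finite-dimensional representation of the torus $T$, is automatically a direct sum of two characters, say $\k(w_1)\oplus\k(w_2)$; the whole task is to pin down the unordered pair $\{w_1,w_2\}$. I would extract it from two pieces of data that are far more accessible than the individual weights: one weight directly (the ``$f^{a}$ direction''), and the sum $w_1+w_2$, which is the weight of $\det\S(d)|_{t}$ and which the commutative diagram \eqref{equation:clebsch-resolved} computes essentially for free. Throughout, write $\beta=(\beta_1,\beta_2,\beta_3)$ for the three coordinate morphisms, so that $\beta_1(t)=x^{i}y^{m-i}$, $\beta_2(t)=x^{j}y^{n-j}$, $\beta_3(t)=x^{k}y^{m+n-2-k}$.

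For the first weight, let $A\from\transvectant_{m,n}\to\P S_{d}$ be the morphism $t\mapsto[(\beta_1 t)^{a}]$, obtained by composing the regular map $\beta_1$ with the $a$-th power Veronese $\P S_{m}\to\P S_{d}$; at the fixed point it sends $t$ to the monomial $x^{ai}y^{d-ai}$. Both the tautological line $A^{*}\O_{\P S_{d}}(-1)$ and the rank-two bundle $\S(d)$ sit inside the trivial bundle $S_{d}\otimes\O_{\transvectant_{m,n}}$, and on the dense open locus where $\beta_{12}$ is an isomorphism and $f^{a},g^{b}$ are independent we have $\S(d)|_{p}=\langle f^{a},g^{b}\rangle$ with $A(p)=[f^{a}]$, so $A^{*}\O(-1)\subseteq\S(d)$ there. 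The composite $A^{*}\O(-1)\hookrightarrow S_{d}\otimes\O\twoheadrightarrow(S_{d}\otimes\O)/\S(d)$ is then a bundle map vanishing on a dense set, hence identically zero, so the inclusion $A^{*}\O(-1)\subseteq\S(d)$ holds everywhere. All maps here are $T$-equivariant, so $A^{*}\O(-1)|_{t}$ is a one-dimensional $T$-subrepresentation of $\S(d)|_{t}$; being the monomial line $\k\,x^{ai}y^{d-ai}$ of weight $2ai-d$, it forces one of $w_1,w_2$ to equal $2ai-d$.

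For the sum of the weights, note that the first transvectant is bilinear and alternating, so it induces a $T$-equivariant bundle map $\det\S=\wedge^{2}\S\to S_{2d-2}\otimes\O_{\Gr(2,S_{d})}$ which is nowhere zero precisely because $\upsilon$ is a morphism (\Cref{corollary:wronskiregular}); this identifies $\det\S$ with $\upsilon^{*}\O_{\P S_{2d-2}}(-1)$ $T$-equivariantly. Pulling back along $\widetilde{\varphi_{a,b}}$ and invoking the commutativity of \eqref{equation:clebsch-resolved} (equivalently the identity $\{f^{a},g^{b}\}=ab\,f^{a-1}g^{b-1}\{f,g\}$, the scalar being irrelevant), I obtain
\[
\det\S(d)=\bigl(\upsilon\circ\widetilde{\varphi_{a,b}}\bigr)^{*}\O(-1)=\bigl(\mult\circ\widetilde{\epsilon}\bigr)^{*}\O(-1).
\]
Both components of $\widetilde{\epsilon}$ are determined by $\beta(t)$: its $\P S_{m+n-2}$ component is $\beta_3(t)=x^{k}y^{m+n-2-k}$, while its first component, being the morphism $t\mapsto[(\beta_1 t)^{a-1}(\beta_2 t)^{b-1}]$, is the monomial $x^{(a-1)i+(b-1)j}y^{\cdots}$. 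Multiplying, $\mult\circ\widetilde{\epsilon}$ sends $t$ to the degree $2d-2$ monomial $x^{Q}y^{2d-2-Q}$ with $Q=(a-1)i+(b-1)j+k$, whose $\O(-1)$-fiber has weight $2Q-(2d-2)$. Since the identification of $\det\S(d)|_{t}$ with this fiber is $T$-equivariant, $w_1+w_2=2\bigl((a-1)i+(b-1)j+k\bigr)-(2d-2)$.

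Subtracting the known weight $2ai-d$ from this sum yields the second weight $(2bj-d)+2k-2i-2j+2$, which is exactly the claimed decomposition. The only step demanding genuine care is the ``$f^{a}$ survives in the limit'' claim: the non-isomorphism locus of $\beta_{12}$—where the exceptional fixed points of \Cref{definition:exceptionalfixedpoint} live, namely where $ai=bj$ and the two naive monomials $x^{ai}y^{d-ai}$ and $x^{bj}y^{d-bj}$ collide—is typically a divisor, so one cannot simply appeal to Hartogs. The point is instead that the containment of one subbundle in another is a closed condition, cut out by the vanishing of a section, and a section vanishing on a dense open set vanishes identically. Everything else is the bookkeeping of monomial weights.
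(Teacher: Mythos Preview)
Your proof is correct, and at its core it uses the same two ingredients as the paper's: the fact that $[f^{a}]$ always lies in the pencil $\widetilde{\varphi_{a,b}}(t)$, and the identification of $\upsilon\circ\widetilde{\varphi_{a,b}}$ via the commutative diagram \eqref{equation:clebsch-resolved}. The difference is in packaging. The paper separates into unexceptional and exceptional cases: in the unexceptional case the weights are read off directly, while in the exceptional case it argues that $\widetilde{\varphi_{a,b}}$ factors through $\widetilde{\varphi_{1,1}}$ (so the pencil contains $(x^{i}y^{m-i})^{a}$), writes the other member as an unknown monomial $x^{u}y^{v}$, and solves for $u,v$ by equating $\{(x^{i}y^{m-i})^{a},x^{u}y^{v}\}$ with the Wronskian computed through the diagram. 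Your argument replaces this case analysis with two global bundle identifications: the subbundle inclusion $A^{*}\O(-1)\subseteq\S(d)$ (a closed-condition argument, equivalent in content to the paper's factorization remark) gives one weight uniformly, and the $T$-equivariant isomorphism $\det\S(d)\cong(\mult\circ\widetilde{\epsilon})^{*}\O(-1)$ gives the sum of the weights directly, bypassing the need to name the second monomial. Since the weight of the transvectant of two monomials is exactly the sum of their weights (this is just $T\subset\SL(2)$-equivariance), the two extractions of the second weight are the same computation; yours is simply the coordinate-free rendering. The gain is a uniform proof with no case split; the paper's version has the mild advantage of actually exhibiting the limiting pencil.
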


\begin{proof}
    There are two cases to consider: (1) $t$ is unexceptional, or (2) $t$ is exceptional.

    In case (1), the claim is obvious from the description of $\varphi_{a,b}$, considering that $k$ must equal $i+j-1$ when $t$ is unexceptional.  
    
    In case (2), we first make an observation: The point $\widetilde{\varphi_{a,b}}(t) \in \Gr(2,S_{d})$ is contained in the subset of pencils having $(x^{i}y^{m-i})^{a}$ (which is equal to $(x^{j}y^{n-j})^{b}$ in this case) as a member. This is because the map $\widetilde{\varphi_{a,b}}$ factors through the map $\widetilde{\varphi_{1,1}}: \transvectant_{d,d} \to \Gr(2,S_{d})$.
    
    As $\widetilde{\varphi_{a,b}}(t)$ is $T$-fixed, it follows that \[\widetilde{\varphi_{a,b}}(t) = (x^{i}y^{m-i})^{a} \wedge x^{u}y^{v}\] for some $u,v$. To extract more information, observe that the point $\upsilon(\widetilde{\varphi_{a,b}}(t))$ can be read directly from the point $t$ by going the other way around diagram \eqref{equation:clebsch-resolved}. The result is \[\upsilon(\widetilde{\varphi_{a,b}}(t)) = (x^{i}y^{m-i})^{a-1}(x^{j}y^{n-j})^{b-1}(x^{k}y^{m+n-2-k}) \in \P S_{2d-2}.\]  On the other hand, by directly computing the transvectant $\{(x^{i}y^{m-i})^{a} , x^{u}y^{v} \}$ we find that 
    \begin{align*}
        u &= -(i-1) + (b-1)j + k\\
        v &= -(m-i-1) + (b-1)(n-j) + (m+n-2-k)
    \end{align*}
    Therefore, the weight of the monomial $x^{u}y^{v}$ is $(2bj-d)+2k-2i -2j+2$ and the exceptional case of the proposition follows.
\end{proof}

Though \Cref{proposition:weightsofS} is quite elementary, the next few corollaries show how it points the way to the general solution to the problem of counting sums of powers.  

\begin{corollary}
\label{corollary:samerestriction}
Write $\O(a_1,a_2,a_3)$ for the line bundle obtained by pulling back $\O(a_{i})$ from the $i$-th factor of $\P S_{m} \times \P S_{n} \times \P S_{m+n-2}$. The vector bundle $\S(d)$ on $\transvectant_{m,n}$ restricts to each $T$-fixed point of $\transvectant_{m,n}$ in the same way as \[\beta^{*}\left(\O(-a,0,0) \oplus \O(1,1-b,-1)\right)\] as $T$-representations. 
\end{corollary}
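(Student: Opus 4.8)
The plan is to verify the asserted equality of rank-$2$ $T$-representations fiber by fiber, directly comparing the weights produced by \Cref{proposition:weightsofS} with the weights of $\beta^{*}(\O(-a,0,0) \oplus \O(1,1-b,-1))$ at each $T$-fixed point. The reduction that makes this tractable is that $\beta$ is $T$-equivariant (\Cref{proposition:clebsch-resolved}), so for any $T$-fixed point $t \in \transvectant_{m,n}$ and any line bundle $L$ on the product $\P S_{m} \times \P S_{n} \times \P S_{m+n-2}$, the fiber $(\beta^{*}L)|_{t}$ agrees with $L|_{\beta(t)}$ as a $T$-representation. Thus everything comes down to computing the weights of $\O(a_1,a_2,a_3)$ at the product fixed point $\beta(t) = (x^{i}y^{m-i}, x^{j}y^{n-j}, x^{k}y^{m+n-2-k})$.

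First I would record the basic weight formula. Because $\P W$ parametrizes lines in $W$, the fiber of the tautological bundle $\O(-1)$ at a $T$-fixed point is exactly the weight line it represents; hence at the fixed point $x^{i}y^{e-i} \in \P S_{e}$, whose spanning monomial has weight $2i - e$, the bundle $\O(c)$ restricts to the character $\k(-c(2i-e)) = \k(c(e-2i))$. Applying this factorwise, the summand $\O(-a,0,0)$ contributes only through the first factor and restricts to $\k(-a(m-2i)) = \k(2ai - d)$, using $am = d$; this is exactly the first character in \Cref{proposition:weightsofS}. For the summand $\O(1,1-b,-1)$ I would add the three factorwise weights $(m-2i)$, $(1-b)(n-2j)$, and $(2k - m - n + 2)$, then expand and simplify using $bn = d$; the $m$ and $n$ terms cancel and the expression collapses to $(2bj - d) + 2k - 2i - 2j + 2$, which is precisely the second character in \Cref{proposition:weightsofS}.

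Since both $\S(d)|_{t}$ and $\beta^{*}(\O(-a,0,0)\oplus \O(1,1-b,-1))|_{t}$ are direct sums of two characters, and the two characters match term by term at every fixed point, the two rank-$2$ $T$-representations coincide, which is the assertion. I do not expect a genuine obstacle here; the only point requiring care is the sign bookkeeping in the weight formula, namely the consequence of the paper's convention that $\P W$ parametrizes lines (so that $\O(-1)$, not $\O(1)$, is tautological and carries the unnegated weight). Once this convention is pinned down, the matching of both characters is a short and mechanical computation.
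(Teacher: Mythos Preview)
Your proposal is correct and follows exactly the paper's approach: the paper's proof is a single sentence declaring the corollary ``immediate from \Cref{proposition:weightsofS},'' and you have simply written out the fiberwise weight comparison that makes that immediacy explicit. Your sign conventions and the use of $am = bn = d$ are handled correctly, so there is nothing to add.
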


\begin{proof}
    This is immediate from \Cref{proposition:weightsofS}.
\end{proof}


\begin{corollary}
    \label{corollary:intersectionnumbers}
    For $i = 1,2,3$ let $\zeta_{i}$ denote the divisor class on $\P S_{m} \times \P S_{n} \times \P S_{m+n-2}$ obtained by pulling back the hyperplane class from the $i$-th factor.  Suppose $p(\sigma_1,\sigma_2) \in \Z[\sigma_1,\sigma_2]$ is a homogeneous degree $m+n$ polynomial in the two Chern classes $\sigma_1,\sigma_2$ of $\mathscr{S}(d)$ where $\sigma_j$ has degree $j$.   Finally, set 
    \begin{align*}
        \alpha_{1} &= (-a+1)\zeta_1 + (-b+1)\zeta_2 - \zeta_3\\
        \alpha_{2} &= -a\zeta_1 \cdot \left(\zeta_1 + (-b+1)\zeta_2 - \zeta_3\right).
    \end{align*}
    
    Then we get the following equality of integers: \[\int_{\transvectant_{m,n}} p(\sigma_1,\sigma_2) = \int_{\P S_{m} \times \P S_{n} \times \P S_{m+n-2}} p(\alpha_1,\alpha_2) \cap \beta_{*}[\transvectant_{m,n}].\]
\end{corollary}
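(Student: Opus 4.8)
The plan is to read the right-hand side, after an application of the projection formula, as the integral over $\transvectant_{m,n}$ of the same polynomial $p$ applied to the Chern classes of the \emph{pulled-back} bundle $\beta^{*}\mathcal{E}$, where
\[
\mathcal{E} := \O(-a,0,0)\oplus\O(1,1-b,-1) \quad \text{on} \quad P := \P S_{m}\times\P S_{n}\times\P S_{m+n-2},
\]
and then to compare the two resulting Chern numbers on $\transvectant_{m,n}$ by $T$-equivariant localization. First I would record, via the Whitney sum formula, that $c_{1}(\mathcal{E}) = (-a+1)\zeta_{1}+(-b+1)\zeta_{2}-\zeta_{3} = \alpha_{1}$ and $c_{2}(\mathcal{E}) = -a\zeta_{1}\cdot(\zeta_{1}+(-b+1)\zeta_{2}-\zeta_{3}) = \alpha_{2}$, so that $\beta^{*}\alpha_{i} = c_{i}(\beta^{*}\mathcal{E})$ and hence $p(\alpha_{1},\alpha_{2})$ pulls back to $p\big(c_{1}(\beta^{*}\mathcal{E}),c_{2}(\beta^{*}\mathcal{E})\big)$.

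Since $\beta$ is proper, the projection formula gives
\[
\int_{P} p(\alpha_{1},\alpha_{2})\cap\beta_{*}[\transvectant_{m,n}] = \int_{\transvectant_{m,n}} \beta^{*}p(\alpha_{1},\alpha_{2}) = \int_{\transvectant_{m,n}} p\big(c_{1}(\beta^{*}\mathcal{E}),c_{2}(\beta^{*}\mathcal{E})\big),
\]
so the entire statement reduces to the single equality of top Chern numbers $\int_{\transvectant_{m,n}} p(\sigma_{1},\sigma_{2}) = \int_{\transvectant_{m,n}} p\big(c_{1}(\beta^{*}\mathcal{E}),c_{2}(\beta^{*}\mathcal{E})\big)$. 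It is worth stressing that $\S(d)$ and $\beta^{*}\mathcal{E}$ are genuinely different bundles whose ordinary Chern classes do \emph{not} agree (already their $c_{1}$ differ on the locus where $\beta_{12}$ is an isomorphism), so this is not a formal identity; its whole content is \Cref{corollary:samerestriction}.

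The mechanism is equivariant localization. Both $\S(d)$ and $\beta^{*}\mathcal{E}$ are $T$-equivariant, with the linearizations implicit in \Cref{corollary:samerestriction}, so $p(\sigma_{1},\sigma_{2})$ and $p(c_{1}(\beta^{*}\mathcal{E}),c_{2}(\beta^{*}\mathcal{E}))$ lift to equivariant classes and each integral can be written, by the Atiyah--Bott--Berline--Vergne formula, as a sum over the $T$-fixed locus of the restriction of the integrand divided by the equivariant Euler class of the normal bundle. The denominators are intrinsic to $\transvectant_{m,n}$ and are identical in the two computations; the numerator at a fixed point $t$ is $p$ evaluated on the equivariant Chern roots of $\S(d)|_{t}$ in one case and of $(\beta^{*}\mathcal{E})|_{t}$ in the other. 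These two $T$-representations coincide by \Cref{corollary:samerestriction}, so the numerators agree, the localization sums agree term by term, and the two integrals are equal.

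The one real obstacle is that $\transvectant_{m,n}$, being a normalization, need not be smooth, whereas the localization formula requires smoothness. I would remove this by choosing a $T$-equivariant resolution $\rho\from\widetilde{X}\to\transvectant_{m,n}$ (available in characteristic zero) and pulling everything back: the projection formula again yields $\int_{\transvectant_{m,n}} p(c(V)) = \int_{\widetilde{X}} p(c(\rho^{*}V))$ for $V=\S(d)$ and $V=\beta^{*}\mathcal{E}$, with $\widetilde{X}$ smooth. The saving observation is that each connected component $F$ of $\widetilde{X}^{T}$ is carried by $\rho$ to a single $T$-fixed point $t$ of $\transvectant_{m,n}$ (the image is connected and $T$-fixed, and the fixed points below are isolated); consequently $(\rho^{*}\S(d))|_{F}$ and $(\rho^{*}\beta^{*}\mathcal{E})|_{F}$ are the equivariantly constant bundles with fibers $\S(d)|_{t}$ and $(\beta^{*}\mathcal{E})|_{t}$, which agree by \Cref{corollary:samerestriction}. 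Thus the numerators match on each $F$ even when $F$ is positive-dimensional, and the localization comparison above runs verbatim on $\widetilde{X}$. The hard part, then, is precisely this reduction: justifying the passage to the resolution and checking that the fixed components collapse to single fixed points, so that the relevant restricted bundles become equivariantly trivial with equal fibers.
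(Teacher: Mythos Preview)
Your proposal is correct and follows essentially the same approach as the paper: reduce to comparing $\int p(c(\S(d)))$ with $\int p(c(\beta^{*}\mathcal{E}))$ on $\transvectant_{m,n}$, pass to a $T$-equivariant resolution to make localization available, and conclude from \Cref{corollary:samerestriction} that the two pulled-back bundles have identical equivariant restrictions to each fixed component, so the localization sums agree term by term. If anything, you are more explicit than the paper in verifying $c_{i}(\mathcal{E})=\alpha_{i}$, in invoking the projection formula, and in arguing that each fixed component upstairs collapses to a single fixed point downstairs.
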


\begin{proof}
    This is a consequence of \Cref{corollary:samerestriction} and localization with respect to the torus action (see for instance \cite{edidin1998localization}).  Specifically, let $\rho: \transvectant_{m,n}^{'} \to \transvectant_{m,n}$ be a $T$-equivariant resolution of singularities. Then the bundles $\rho^{*}\S(d)$ and $\rho^{*}\beta^{*}\O(-a,0,0) \oplus \rho^{*}\beta^{*}\O(1,1-b,-1)$ have the same $T$-equivariant restrictions to each $T$-fixed subvariety. Therefore, torus localization with respect to the $T$-action on $\transvectant_{m,n}'$ implies $$\int_{\transvectant_{m,n}^{'}} \rho^{*} p(\sigma_1,\sigma_2) = \int_{\transvectant_{m,n}^{'}} \rho^{*} p(\alpha_1,\alpha_2),$$ and then the corollary follows from pushing this equality forward to $\P S_{m} \times \P S_{n} \times \P S_{m+n-2}$.
\end{proof}

\begin{corollary}
    \label{corollary:solutioninprinciple} 

    Let $\alpha_{1}$ and $\alpha_2$ denote the classes found in \Cref{corollary:intersectionnumbers}.  Write $\gamma \in \CH^{m+n} \P S_{m} \times \P S_{n} \times \P S_{m+n-2}$ for the degree $m+n$ term of the formal sum $\sum_{i=0}^{\infty} (-\alpha_{1}-\alpha_{2})^{i}$.  

    Then the locus of forms in $\P S_{d}$ which are the sum of an $a$'th power and a $b$'th power is the intersection product \[\int_{\P S_{m} \times \P S_{n} \times \P S_{m+n-2}} \gamma \cap \beta_{*}[\transvectant_{m,n}].\]
\end{corollary}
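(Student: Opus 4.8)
The plan is to exhibit the join of $A$ and $B$ as the image of the projectivized bundle $\P(\S(d))$, to compute its degree as the top self-intersection of the relative hyperplane class, to recognize that self-intersection as a top Segre class of $\S(d)$, and finally to invoke \Cref{corollary:intersectionnumbers} to convert everything into the stated intersection product on $\P S_m \times \P S_n \times \P S_{m+n-2}$.

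First I would form $\pi \from \P(\S(d)) \to \transvectant_{m,n}$, the bundle of lines in the fibers of $\S(d)$. Because $\S(d)$ is the pullback along $\widetilde{\varphi_{a,b}}$ of the tautological rank $2$ subbundle of $\O \otimes S_d$ over $\Gr(2,S_d)$, there is a closed immersion $\P(\S(d)) \hookrightarrow \transvectant_{m,n} \times \P S_d$, and the second projection defines a morphism $\psi \from \P(\S(d)) \to \P S_d$. Over a point $t$ the fiber $\P(\S(d)_t)$ maps isomorphically onto the line in $\P S_d$ spanned by the pencil $\widetilde{\varphi_{a,b}}(t) = f^a \wedge g^b$, so the image of $\psi$ is precisely the join of $A$ and $B$. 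Moreover $\psi^* \O_{\P S_d}(1) = \O_{\P(\S(d))}(1)$, so writing $\xi = c_1(\O_{\P(\S(d))}(1))$ and $\zeta$ for the hyperplane class on $\P S_d$ we have $\xi = \psi^* \zeta$. As $\transvectant_{m,n}$ is birational to $\P S_m \times \P S_n$ it has dimension $m+n$, hence $\P(\S(d))$ has dimension $m+n+1$, and by the projection formula
\[\int_{\P(\S(d))} \xi^{m+n+1} = \int_{\P S_d} \zeta^{m+n+1} \cap \psi_*[\P(\S(d))],\]
which is the degree of the join cycle.

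Next I would push $\xi^{m+n+1}$ down along $\pi$. Since $\S(d)$ has rank $2$, the standard Segre calculus gives $\pi_* \xi^{m+n+1} = s_{m+n}(\S(d))$, where $s(\S(d)) = c(\S(d))^{-1} = (1 + \sigma_1 + \sigma_2)^{-1} = \sum_{i \geq 0} (-(\sigma_1 + \sigma_2))^i$ is the total Segre class. Thus $\int_{\P(\S(d))} \xi^{m+n+1} = \int_{\transvectant_{m,n}} s_{m+n}(\S(d))$, and $s_{m+n}(\S(d))$ is exactly the degree $m+n$ term of $\sum_{i \geq 0}(-(\sigma_1+\sigma_2))^i$, a homogeneous degree $m+n$ polynomial $p(\sigma_1,\sigma_2)$ in the Chern classes of $\S(d)$ (with $\deg \sigma_j = j$). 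Applying \Cref{corollary:intersectionnumbers} to this $p$ replaces each $\sigma_j$ by $\alpha_j$ and integrates against $\beta_*[\transvectant_{m,n}]$; since $p(\alpha_1,\alpha_2)$ is by definition the class $\gamma$, this is the asserted formula.

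The step I expect to require the most care — and the only genuinely geometric input — is justifying that $\int_{\P(\S(d))} \xi^{m+n+1}$ is the enumerative count one wants to call ``the degree of the locus of sums of two powers.'' The integral computes the degree of the cycle $\psi_*[\P(\S(d))]$, which equals $(\deg \psi)\cdot(\deg J)$ for $J$ the reduced join and $\psi$ generically finite onto $J$. When $J$ is a proper subvariety of $\P S_d$ (as in the $\gcd(m,n)=1$ example $f^5+g^3$) one must know $\psi$ is birational onto its image so that the product is literally $\deg J$, whereas when $J$ fills $\P S_d$ (as for Clebsch's sextics, where $\deg J = 1$) the same product is instead the number of representations, i.e. the degree of $\psi$. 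I would therefore isolate the analysis of the generic fiber of $\psi$ as the crux, with the Chern- and Segre-class manipulations above being formal once that is settled.
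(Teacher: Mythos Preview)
Your approach is essentially the same as the paper's: the paper's proof simply invokes \Cref{corollary:intersectionnumbers} together with the ``simple fact'' (left to the reader) that for $\iota\from B\to\Gr(2,N+1)$ the degree of the swept-out variety equals $\int_B [1/c(\iota^*S)]_{\dim B}$, which is exactly the Segre-class computation you spell out via $\P(\S(d))$ and $\pi_*\xi^{m+n+1}=s_{m+n}(\S(d))$. Your final paragraph flags a genuine interpretive ambiguity---whether the integral returns $\deg J$ or $(\deg\psi)\cdot(\deg J)$---that the paper silently absorbs into the phrase ``degree of the locus''; the paper does not resolve this either, so your proof is at least as complete as the one given.
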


\begin{proof}
    This is a direct consequence of \Cref{corollary:intersectionnumbers} and the following simple fact which we leave to the reader: {\sl If $\iota: B \to \Gr(2,N+1)$ is a morphism to a Grassmannian from a projective variety $B$, then $$\int_{B} \left[ \frac{1}{c(\iota^{*} S)} \right]_{\dim B}$$ is the degree of the variety in $\P^{N}$ swept out by the lines parametrized by $\iota$.} (Here $S$ is the tautological rank $2$ bundle on $\Gr(2,N)$, and $c(S)$ is its Chern polynomial.)
\end{proof}

\subsection{Computing $\beta_{*}[\transvectant_{m,n}]$ in two special cases}
\label{subsection:gcds}

\begin{theorem}
    \label{theorem:czvclass} Let $m, n$ be two positive integers satisfying $m \leq n$ and consider the variety of first transvectants $\beta: \transvectant_{m,n} \to \P S_{m} \times \P S_{n} \times \P S_{m+n-2}$. For $i \in \{1,2,3\}$ let $$\zeta_{i} \in \CH^{1}(\P S_{m} \times \P S_{n} \times \P S_{m+n-2})$$ denote the pullback of the hyperplane class from the $i$-th factor. 
    \begin{enumerate}
        \item If $\gcd(m,n) = 1$, then 
        \begin{equation}
            \label{equation:czvclass1}
        \beta_{*}[\transvectant_{m,n}] = \left[\frac{(1+\zeta_{1}+\zeta_{2})^{m+n-1}}{1+\zeta_{1}+\zeta_{2}-\zeta_{3}} \right]_{m+n-2}.
        \end{equation} 
        \item  If $\gcd(m,n) = 2$, then 
        \begin{align}    
            \label{equation:czvclass2}
        \nonumber \beta_{*}[\transvectant_{m,n}] & =  \left[\frac{(1+\zeta_{1}+\zeta_{2})^{m+n-1}}{1+\zeta_{1}+\zeta_{2}-\zeta_{3}} \right]_{m+n-2} \\
        & - 2^{m-2}\left(\left(\frac{m}{2}\right)^{2}\zeta_{1}^{m-2}\zeta_{2}^{n} + \frac{n}{2}\frac{m}{2}\zeta_{1}^{n-1}\zeta_{2}^{m-1} + \left(\frac{n}{2}\right)^{2}\zeta_{1}^{m}\zeta_{2}^{n-2}\right).
        \end{align}    
    \end{enumerate}
    \end{theorem}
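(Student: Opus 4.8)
The plan is to realize $\beta_{*}[\transvectant_{m,n}]$ as the fundamental class of the closure of the graph of the transvectant map and to read it off from \Cref{proposition:resolution}. By \Cref{remark:schemestructure} the map $\{\,,\,\}\from\P S_{m}\times\P S_{n}\dashrightarrow\P S_{m+n-2}$ is given by the $m+n-1$ coefficient sections of $\L=\O(1,1)$, so the closure $\overline{\Gamma}$ of its graph is the blow-up of $\P S_{m}\times\P S_{n}$ along the base scheme $Y$, embedded in $\P S_{m}\times\P S_{n}\times\P S_{m+n-2}$ after identifying $\P\langle T\rangle^{*}$ with $\P S_{m+n-2}$. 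Since $\beta$ factors through $\overline{\Gamma}$ by a finite birational normalization, the projection formula gives $\beta_{*}[\transvectant_{m,n}]=[\overline{\Gamma}]$. Writing $e=\gcd(m,n)$, $m=em'$, $n=en'$ with $\gcd(m',n')=1$, \Cref{lemma:indeterminacy} identifies $Y_{\red}$ with the image of $\psi\from\P S_{e}\to\P S_{m}\times\P S_{n}$, $h\mapsto(h^{m'},h^{n'})$. This image is irreducible of dimension $e$, and $\psi$ is birational onto it because $\gcd(m',n')=1$ lets one recover $h$ up to scalar from $h^{m'}$. Hence $Y$ has pure codimension $m+n-e$.

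For part (1), $e=1$, so $Y$ has codimension $m+n-1$, exactly the number of sections. I would apply \Cref{proposition:resolution}, parts (1)--(3), with $S$ the set of $m+n-1$ coefficients, $r=m+n-1$, $\lambda=\zeta_{1}+\zeta_{2}=c_{1}(\O(1,1))$, and $\zeta=\zeta_{3}$. Formula \eqref{equation:classS} then yields \eqref{equation:czvclass1} verbatim, after pushing forward by the degree-one normalization.

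For part (2), $e=2$, so $Y$ has codimension $m+n-2$, one less than the number of sections, and it is a single irreducible component. I would apply \Cref{proposition:resolution}, part (4), with $r=m+n-2$: the top-Chern term is the first summand of \eqref{equation:czvclass2}, and the correction is $e_{1}[Y\times\P\langle T\rangle^{*}]$ with $e_{1}$ the multiplicity from part (5). The class $[Y]$ I would compute as $\psi_{*}[\P S_{2}]$: since $h\mapsto h^{m'}$ (resp.\ $h^{n'}$) is given by forms of degree $m'$ (resp.\ $n'$) in the coefficients of $h$, we get $\psi^{*}\zeta_{1}=m'H$ and $\psi^{*}\zeta_{2}=n'H$, where $H$ is the hyperplane class on $\P S_{2}=\P^{2}$. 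The projection formula then gives the coefficient of $\zeta_{1}^{a}\zeta_{2}^{b}$ (with $a+b=m+n-2$) as $(m')^{m-a}(n')^{n-b}$, so the only surviving monomials are $\zeta_{1}^{m-2}\zeta_{2}^{n}$, $\zeta_{1}^{m-1}\zeta_{2}^{n-1}$, $\zeta_{1}^{m}\zeta_{2}^{n-2}$, with respective coefficients $(m/2)^{2}$, $(m/2)(n/2)$, $(n/2)^{2}$.

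The remaining and genuinely delicate step is to show $e_{1}=2^{m-2}$. Using part (5) and $\GL(2)$-equivariance I would reduce to $h=xy$, $p=(x^{m'}y^{m'},x^{n'}y^{n'})$, and study $V(W)=\{(f,g):\{f,g\}\in\langle\ell\rangle\}$ for general $\ell\in S_{m+n-2}$ on a transverse slice $\Sigma$ of dimension $m+n-2$. Linearizing the transvectant along $\Sigma$ via the Leibniz rule gives the differential $(u,v)\mapsto m'h^{m'-1}\{h,v\}-n'h^{n'-1}\{h,u\}$, and the identity $\{xy,x^{a}y^{D-a}\}=(D-2a)\,x^{a}y^{D-a}$ shows its image has dimension exactly $n$; thus the linearized equations cut out an $(m-2)$-dimensional transverse kernel $K$. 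On $K$ the transvectant begins in degree two with leading form $\{u,v\}$, so after eliminating the $n$ nondegenerate directions the scheme $V(W)\cap\Sigma$ is defined near $p$ by $m-2$ quadrics in $m-2$ variables. The main obstacle is precisely to verify that for general $\ell$ and general $\Sigma$ these residual quadrics are sufficiently generic that their leading forms meet only at the origin, so that the local multiplicity is exactly $2^{m-2}$ by B\'ezout in the local ring. Granting this, $e_{1}[Y]=2^{m-2}[Y]$ is the subtracted term of \eqref{equation:czvclass2}, completing the proof.
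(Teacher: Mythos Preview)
Your plan matches the paper's proof almost exactly: both invoke \Cref{proposition:resolution} with $X=\P S_m\times\P S_n$, $\L=\O(1,1)$, and the $m+n-1$ coefficient sections of $\{f,g\}$; both use \Cref{lemma:indeterminacy} to identify $Y_{\red}$ and hence determine which case (``$S$'' or ``$T$'') of \Cref{proposition:resolution} applies; and both compute $[Y_{\red}]$ in the $\gcd=2$ case and reduce the multiplicity $e_1$ to a local computation at $(x^{m/2}y^{m/2},x^{n/2}y^{n/2})$ via $\GL(2)$-equivariance.

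The one substantive difference is precisely the step you flag as ``the main obstacle'' and then grant. The paper does not grant it but proves it, and the argument is more concrete than your Leibniz-rule linearization. Working in the affine chart $f_{m/2}=g_{n/2}=1$, the paper examines the dehomogenized coefficient polynomials $t_0,\dots,t_{m+n-2}$ individually and shows: (i) the first and last $m/2-1$ of them are homogeneous quadratic for trivial index reasons (no monomial $f_{m/2}g_{j}$ or $f_{i}g_{n/2}$ can appear); (ii) the middle one $t_{m/2+n/2-1}$ is also homogeneous quadratic because the would-be linear contributions from $f_{m/2}g_{n/2}$ cancel in the transvectant; (iii) the remaining $n$ polynomials have nonzero linear parts which are \emph{visibly} linearly independent (each involves a distinct $g_j$). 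Thus the span $Q$ of the purely quadratic $t_k$ has dimension $m-1$; for general $W$ one has $\dim(W\cap Q)=m-2$, and the initial forms of a basis of $W$ then consist of $m-2$ quadrics and $n$ independent linear forms in $\P T_uU\cong\P^{m+n-1}$. Since $V(W)$ is a surface near $u$, these initial forms cut out a curve, hence form a complete intersection, and B\'ezout gives degree $2^{m-2}$. This closes the gap without any further genericity hypothesis on the quadrics, whereas your formulation (``these residual quadrics are sufficiently generic'') leaves exactly this verification undone.
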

    
    \begin{proof} Recall that, by its very construction, $\beta(\transvectant_{m,n})$ is the closure of the graph of the first transvectant map.  We will refer to \Cref{proposition:resolution} in the special case where: 
\begin{enumerate}
    \item $X = \P S_{m} \times \P S_{n}$
    \item $Y = \beta(\transvectant_{m,n}) \subset X \times \P S_{m+n-2}$
    \item $\L = p_1^{*}\O(1) \otimes p_2^{*}\O(1)$ on $X$ -- the first transvectant map is given by bi-degree $(1,1)$ polynomials.
\end{enumerate}

Observe: 
\begin{enumerate}
    \item[(A)] When $\gcd(m,n) = 1$, the scheme $Y \subset \P S_{m} \times \P S_{n}$ defined by $\{f,g\} = 0$ is the common vanishing scheme of $m+n-1$ sections of $\L$ {\sl and has codimension $m+n-1$}, by \Cref{lemma:indeterminacy}. This falls within the scope of the ``S'' case of \Cref{proposition:resolution}.
    \item[(B)] When $\gcd(m,n) = 2$, the scheme $Y \subset \P S_{m} \times \P S_{n}$ defined by $\{f,g\} = 0$ is the common vanishing scheme of $m+n-1$ sections of $\L$ {\sl and has codimension $m+n-2$}, by \Cref{lemma:indeterminacy}. This falls within the scope of the ``T'' case of \Cref{proposition:resolution}. 
\end{enumerate}

We now take both statements in the theorem in turn.

    \begin{enumerate}
        \item This is immediate from part (3) of \Cref{proposition:resolution} in light of observation (A) above.
        \item Part (4) of \Cref{proposition:resolution} applies thanks to observation (B) above.  We conclude that there exists a positive integer $e$ such that 
        \begin{equation}
            \beta_{*}[\transvectant_{m,n}] = \sum_{i=0}^{m+n-2} (\zeta_{1}+\zeta_{2})^{i} \cdot \zeta_{3}^{m+n-2-i} - e[Z],
        \end{equation}
        where $Z$ is the set of triples $(q^{m/2},q^{n/2},p)$ as $q \in \P S_{2}$ and $p \in \P S_{m+n-2}$ vary freely.  The cycle class of $Z$ is easily determined to be \[\left(\frac{m}{2}\right)^{2}\zeta_{1}^{m-2}\zeta_{2}^{n-2} + \frac{m}{2}\frac{n}{2}\zeta_{1}^{m-1}\zeta_{2}^{n-1} + \left(\frac{n}{2}\right)^{2}\zeta_{1}^{m}\zeta_{2}^{n-2}.\]  Therefore, it remains to show that $e = 2^{m-2}$, and this is what we now turn to.
    
        Part (5) of \Cref{proposition:resolution} tells us how to compute $e$.  Let 
        \begin{align*}
        f = f_{0}x^{m} + {m \choose 1} f_{1}x^{m-1}y + {m \choose 2} f_{2}x^{m-2}y^{2} + \dots + {m \choose m-1} f_{m-1}xy^{m-1} + f_{m}y^{m},\\
        g = g_{0}x^{n} + {n \choose 1} g_{1}x^{n-1}y + {n \choose 2} g_{2}x^{n-2}y^{2} + \dots + {n \choose n-1} g_{n-1}xy^{n-1} + g_{n}y^{n}
        \end{align*}
        denote general binary forms of degrees $m$ and $n$.   Then let 
        \begin{align*}
            t = t_0 x^{m+n-2} + t_{1}x^{m+n-3}y + \dots + t_{m+n-2}y^{m+n-2}
        \end{align*}
        denote the transvectant form $\{f,g\}$. The following claims are immediate from the definition of the transvectant: 
        \begin{enumerate}
            \item Each $t_{k}$ is a homogeneous sum of monomials of the form $f_{i}g_{j}$.
            \item The monomials $f_{i}g_{j}$ occurring in $t_{k}$ are precisely those which satisfy $i+j = k+1$.
            \item If $f_{i}, g_{j}, f_{j}, g_{i}$ all exist, then the coefficients of $f_{i}g_{j}$ and $f_{j}g_{i}$ in $t_{i+j-1}$ differ only in sign.
        \end{enumerate}
    
        Let $Y \subset \P S_{m} \times \P S_{n}$ denote the base-scheme of the transvectant map, and denote by $Y_{\red}$ its underlying reduced scheme. Then, thanks to \Cref{lemma:indeterminacy}, $Y_{\red}$ is isomorphic to $\P S_{2}$ because of our assumption that $\gcd(m,n) = 2$.  Here, the point in $Y_{\red}$ corresponding to a quadratic form $q \in \P S_{2}$ is the point $(q^{m/2}, q^{n/2})$.  The action of $\GL(2)$ on $\P S_{2}$ is transitive on the open dense set of quadratic forms with distinct roots, and so, by $\GL(2)$-equivariance, it suffices to compute the integer $e$ (following the recipe inherent in part (5) of \Cref{proposition:resolution}) at the specific point $u := (x^{m/2}y^{m/2}, x^{n/2}y^{n/2}) \in Y$ corresponding to $xy \in \P S_{2}$. 
    
        Let $U \subset \P S_{m} \times \P S_{n}$ be the affine chart around $u$ obtained by dehomogenizing by setting $f_{m/2} = g_{n/2} = 1$. Then the remaining coefficient variables $f_i, g_j$ are interpreted as affine coordinates on $U$ around the point $u$. Part (5) of \Cref{proposition:resolution} says we must compute the multiplicity of $V(W) \cap U$ at $u$, where $V(W)$ is the scheme cut out by a general codimension $1$ subspace $W \subset \k \langle t_0, \dots, t_{m+n-2} \rangle$. 
    
        For this, we need to access the tangent cone $TC_{u}(V(W))$ and then need to compute the degree of its projectivization, the latter being a curve in the projectivized tangent space $\P T_{u} U$ (it is a curve because $V(W)$ is $2$-dimensional at $u$).  And so, we must analyze the lowest order homogeneous terms of the (now dehomogenized) polynomials $t_0, \dots, t_{m+n-2}$. In what follows, if $p$ is a polynomial, then $\init(p)$ will denote its initial part -- the homogeneous polynomial of lowest degree in $p$.
    
        Our first claim is that the middle polynomial $t_{m/2 +n/2 - 1}$ is homogeneous, and obviously quadratic. (We have dehomogenized by setting $f_{m/2} = g_{n/2} = 1$, so this statement has content.) This is because, in the expression $\{f,g\}$ the contribution to $t_{m/2 +n/2 - 1}$ coming from $x^{m/2}y^{m/2}$ and $x^{n/2}y^{n/2}$ have opposite signs and are equal.  
    
        Our next claim is that the first and last $m/2-1$ polynomials $t_0, \dots, t_{m/2-2}$ and $t_{m+n-2}, \dots, t_{m+n-m/2}$ are also homogeneous (and of course quadratic).  This is simply for index reasons, recalling the fact that the polynomial $t_k$ is a sum of monomials $f_{i}g_{j}$ with $i+j = k+1$. 
    
        Our third claim is that the remaining polynomials $t_{m/2-1}, \dots, t_{m/2+n-1}$, excluding the middle one $t_{m/2+n/2-1}$, begin with a nonzero linear term.  The linear term in $t_{k}$, for $k$ in this range, has nonzero contribution only from $g_{k+1-m/2}$ and, if available, $f_{k+1-n/2}$.  In particular, the $n+1$ linear forms $$\init(t_{m/2-1}), \dots, \init(t_{m/2+n-1})$$ (excluding $t_{m/2+n/2-1}$) are linearly independent. 
    
        Let $Q \subset \k \langle t_0, \dots, t_{m+n-2} \rangle$ be the subspace spanned by the homogeneous quadratic elements among the $t_{i}$ -- by what we have just seen, $Q$ has dimension $2(m/2-1)+1 = m-1$.  Therefore, $W \cap Q$ has dimension $m-2$ (because $W$ is general). Let $w_{1}, \dots, w_{m-2}$ be a basis of $W \cap Q$, and then let $w_{m-1}, \dots, w_{m+n-2}$ be a completion to a basis of $W$.  In particular, the elements $\init(w_{m-1}), \dots, \init(w_{m+n-2})$ are linearly independent linear forms (by generality of $W$).
    
        The common vanishing scheme of $\{w_{1}, \dots, w_{m-2},\init(w_{m-1}), \dots, \init(w_{m+n-2})\}$ is a curve in the projective space $\P T_{u}U$ (because $V(W)$ is a surface near $u$), and hence is the complete intersection of the listed forms. Therefore, this curve has degree $2^{m-2}$ by B\'ezout's theorem, establishing $e = 2^{m-2}$. The theorem is proved.
    \end{enumerate}
    \end{proof}

\section{Questions and Code}
\label{section:conclusion}

By combining \Cref{theorem:czvclass} with \Cref{corollary:solutioninprinciple} in the case $(m,n,a,b) = (2,3,3,2)$ and $(4,6,3,2)$, we recover Clebsch's $40$ and Zariski-Vakil's $3762$.  Applying the same formulas with $(m,n,a,b) = (3,5,5,3)$ and $(4,10,5,2)$ we get $29822$ and $626327$ as promised in the introduction.  We have not attempted to give a closed form for counting sums of two powers in the two $\gcd(m,n)$ cases we've considered, but we do provide at the end of this paper a Python script which computes the degree of the $f^{a} + g^{b}$ locus in these two cases.   

\begin{remark}
    \label{remark:entirecycle} 
    The patient reader will observe that \Cref{theorem:czvclass} combined with \Cref{corollary:solutioninprinciple} actually provide access to the entire cycle $\widetilde{\varphi}_{a,b *}[\transvectant_{m,n}] \in \CH_{m+n}(\Gr(2,S_{d}))$.  The count of sums of two powers is only one coefficient of this cycle when it is expanded with respect to the basis consisting of fundamental classes of Schubert varieties.  For simplicity of exposition, we chose not to include calculations of other coefficients in this note.
\end{remark}

\subsection{Questions}
\label{subsection:questions}

There are many more things to think about, beyond the obvious matter of solving the general problem, but here are a few of our favorites:

\begin{question}
    \label{question:general} What about counting sums of three or more powers? Are there similar mission-critical varieties like $\transvectant_{m,n}$ for these problems?
\end{question}

\begin{question}
    \label{question:higherforms} Sticking to two powers, what happens if we increase the number of variables?  For example, what is the degree of the locus of $f^{3} + g^{2}$ sextics in the $\P^{27}$ of sextic curves in the plane? 
\end{question}

\begin{remark}
    \label{remark:ZariskiPairs} The sextic curves mentioned in \Cref{question:higherforms} famously form one of the two irreducible components of the variety parametrizing sextics with six cusps.  These sextics are branch curves of linear projections of cubic surfaces.  The other component is more mysterious, and the author does not know any good presentation for their equations.  This is the starting point of the beautiful topic of Zariski pairs.
\end{remark}

\begin{remark}
    \label{remark:notdaunting} Although \Cref{question:higherforms} may seem daunting, we note that the base scheme of the rational map $\P^{5} \times \P^{9} \dashrightarrow \G(1,27)$ sending a general pair $(Q,C)$ consisting of a ternary quadric and cubic to the pencil $Q^{3} \wedge C^{2}$ is ill-defined precisely on the locus of pairs $(L^{2},L^{3})$ where $L$ is a linear form.  This locus consists of a single $\GL(3)$-orbit, and so there aren't any embedded primes in the indeterminacy {\sl scheme} of this rational map.  This provides considerable confidence in attempting to find a workable resolution.
\end{remark}

\begin{question}
    \label{question:fundamentalclassofacovariant}  The first transvectant $(f,d) \mapsto \{f,g\}$ is one of the most important examples of an $\SL(2)$-covariant of binary forms.  But there are plenty more covariants, including higher transvectants.  Each of these has a fundamental class, by which we mean the fundamental class of the graph closure as in \Cref{theorem:czvclass}.  What are these classes? 
\end{question}

\subsection{Code}
\label{subsection:code}

The following Python code, generated by Gemini, provides the counts of sums of two powers in the two $\gcd(m,n)$ cases we've considered. 

\begin{lstlisting}[basicstyle=\ttfamily\tiny]  
import math
from sympy import symbols, expand, Poly, factorial

def get_total_degree_part(poly, variables, degree):
    """
    Extracts the part of a polynomial that is homogeneous of a specific total degree.
    """
    if poly == 0:
        return 0
    poly_dict = Poly(poly, *variables).as_dict()
    new_poly = 0
    for powers, coeff in poly_dict.items():
        if sum(powers) == degree:
            term = coeff
            for var, p in zip(variables, powers):
                term *= var**p
            new_poly += term
    return new_poly

def compute_degree(m, n, a, b):
    """
    Computes the degree of the locus of sums of a-th powers and b-th powers.

    Args:
        m (int): Degree of the base form for the first power.
        n (int): Degree of the base form for the second power.
        a (int): The first power.
        b (int): The second power.

    Returns:
        int: The computed degree.
    """
    # Verify conditions from the paper 
    if a * m != b * n:
        raise ValueError("Condition am = bn is not satisfied.")

    common_divisor = math.gcd(m, n)
    if common_divisor not in [1, 2]:
        raise ValueError("Condition gcd(m, n) must be 1 or 2.")

    # Enforce m <= n as assumed in the paper's Theorem 4.7
    if m > n:
        m, n = n, m
        a, b = b, a

    # Define symbolic variables for the hyperplane classes zeta_1, zeta_2, zeta_3
    z1, z2, z3 = symbols('z1 z2 z3')
    variables = (z1, z2, z3)

    # 1. Compute the class gamma based on Corollary 4.5 and 4.6 

    # Define alpha_1 and alpha_2 
    alpha1 = (1 - a) * z1 + (1 - b) * z2 - z3
    alpha2 = -a * z1**2 - a * (1 - b) * z1 * z2 + a * z1 * z3

    # Compute gamma 
    gamma_poly = 0
    for j in range((m + n) // 2 + 1):
        i = m + n - 2 * j
        if i >= 0:
            coeff = ((-1)**(i + j) * factorial(i + j) // (factorial(i) * factorial(j)))
            term = coeff * (alpha1**i) * (alpha2**j)
            gamma_poly += term

    gamma_poly = expand(gamma_poly)

    # 2. Compute the class beta_*[Tvm,n] based on Theorem 4.7

    # First part of the formula, valid for gcd=1 and as a base for gcd=2 
    beta_star_poly = 0
    for i in range(m + n - 1): # Sum up to m+n-2
        term = expand((z1 + z2)**i) * (z3**(m + n - 2 - i))
        beta_star_poly += term

    # Add correction term for the gcd=2 case [cite: 254, 255]
    if common_divisor == 2:
        # Check if m is even, which it must be if gcd(m,n)=2 and m<=n
        if m % 2 != 0:
             # This case should not be reachable if gcd(m,n)=2 and m is the smaller
             raise ValueError("m must be even for gcd(m,n)=2 case.")

        # Correction term from Theorem 4.7 (8)
        # Using // for integer division
        corr_coeff = 2**(m - 2)
        term1 = (m // 2)**2 * z1**(m - 2) * z2**n
        term2 = (n // 2) * (m // 2) * z1**(m - 1) * z2**(n - 1)
        term3 = (n // 2)**2 * z1**m * z2**(n - 2)

        correction_poly = corr_coeff * (term1 + term2 + term3)
        beta_star_poly -= correction_poly

    # 3. Compute the intersection product
    # This is the coefficient of z1^m * z2^n * z3^(m+n-2) in the product gamma * beta_*[Tvm,n]

    # The total degree of the resulting class to integrate is dim(PS_m x PS_n x PS_{m+n-2})
    # = m + n + (m+n-2) = 2m + 2n - 2
    # The degree of gamma is m+n, degree of beta_* is m+n-2. Their sum is 2m+2n-2.

    # We need to find the coefficient of z1^m * z2^n * z3^(m+n-2) in the product
    # This is equivalent to integrating over the product of projective spaces.

    # To optimize, we only need terms from gamma_poly and beta_star_poly that can contribute.

    total_class = expand(gamma_poly * beta_star_poly)

    final_poly = Poly(total_class, z1, z2, z3)

    degree = final_poly.coeff_monomial(z1**m * z2**n * z3**(m + n - 2))

    return int(degree)

if __name__ == '__main__':
    #Interactive part for user input
    print("Enter your own values:")
    try:
        m_user = int(input("Enter m: "))
        n_user = int(input("Enter n: "))
        a_user = int(input("Enter a: "))
        b_user = int(input("Enter b: "))

        user_degree = compute_degree(m=m_user, n=n_user, a=a_user, b=b_user)
        print(f"\nComputed Degree: {user_degree}")

    except (ValueError, TypeError) as e:
        print(f"\nInvalid input or unsupported case: {e}")

\end{lstlisting}

\bibliographystyle{alpha}
 \bibliography{bibliography}

\end{document}